\newtheorem{Theorem}{Theorem}[section]
\newtheorem{Lemma}[Theorem]{Lemma}
\newtheorem{Proposition}[Theorem]{Proposition}
\def\cnvg{\stackrel{v}{\to}}
\def\boldf{\mathbf{f}}
\def\bX{\mathbf{X}}
\def\bY{\mathbf{Y}}
\def\bZ{\mathbf{Z}}
\def\bzero{\mathbf{0}}
\def\bx{\mathbf{x}}
\def\binfty{\bolds{\infty}}
\def\E{\mathbb{E}}
\def\bM{\mathbb{M}}
\def\P{\mathbf{P}}
\def\R{\mathbb{R}}
\def\C{\mathfrak{C}}
\def\D{\mathfrak{D}}
\def\finv{f^{\leftarrow}}
\def\tmu{\tilde{\mu}}
\newcommand{\eqref}[1]{(\ref{#1})}
\def\Enl{\mathbb{E}_{\sqsupset}}
\def\Enb{\mathbb{E}_{\sqcap}}
\def\Snb{S_{\sqcap}}
\def\nunb{\mathbb{\nu}_{\sqcap}}
\def\Ecal{\mathcal{E}}
\begin{document}
\begin{frontmatter}

\title{Conditioning on an extreme component:
Model consistency with regular variation on cones}
\runtitle{Conditioning on an extreme component}

\begin{aug}
\author[a]{\fnms{Bikramjit} \snm{Das}\corref{}\thanksref{a}\ead[label=e1]{bikram@math.ethz.ch}} \and
\author[b]{\fnms{Sidney I.} \snm{Resnick}\thanksref{b}\ead[label=e2]{sir1@cornell.edu}}
\runauthor{B. Das and S.I. Resnick}
\address[a]{RiskLab, Department of Mathematics, ETH Z\"{u}rich, 8092 Z\"{u}rich,
Switzerland.\\ \printead{e1}}
\address[b]{School of Operations Research and Information Engineering,
Cornell University, Ithaca, NY 14853, USA. \printead{e2}}
\end{aug}

\received{\smonth{5} \syear{2008}}
\revised{\smonth{1} \syear{2010}}

%
\begin{abstract}
Multivariate extreme value theory assumes a
multivariate domain of attraction condition for the distribution
of a random vector. This necessitates that each component satisfies a
marginal domain of attraction condition.
An approximation of the joint distribution of a random
vector obtained by conditioning on one of the components being extreme
was developed by Heffernan and Tawn \cite{heffernantawn2004}
and further studied by Heffernan and Resnick \cite{heffernanresnick2007}.
These papers left unresolved the consistency of different models
obtained by conditioning on different components being extreme and
we here provide clarification of this issue.
We also clarify the relationship between these conditional distributions,
multivariate extreme value theory and standard regular variation
on cones of the form $[0,\infty]\times(0,\infty]$.
\end{abstract}

%
\begin{keyword}
\kwd{asymptotic independence}
\kwd{conditional extreme value model}
\kwd{domain of attraction}
\kwd{regular variation}
\end{keyword}

\end{frontmatter}

\section{Introduction}\label{sec:intro}

Classical multivariate extreme value theory (abbreviated as MEVT)
captures the extremal dependence structure between components
under a robust multivariate domain of attraction condition which requires
that each marginal distribution belongs to the (maximum) domain of
attraction (hereafter abbreviated as DOA) of some univariate extreme
value distribution.
Extremal dependence has been well studied,
both in the case of asymptotic dependence
and
asymptotic independence
\cite
{dehaanresnick1977,pickands1981,dehaanferreira2006,resnickbook2008,ledfordtawn1996,ledfordtawn1997,maulikresnick2005,resnickbook2007,ramosledford2009,maulikresnick2005,resnick2002a,resnick2008}.
An innovative approach was provided by Heffernan and Tawn \cite
{heffernantawn2004}, who
approximated multivariate distributions by assuming that only
one of the components was in an extreme value domain of
attraction and that this component was extreme. Their approach allowed
for a variety
of examples of different types of asymptotic dependence and
asymptotic independence.
Their statistical
ideas were given a more mathematical framework by Heffernan and Resnick
\cite{heffernanresnick2007} after slight changes in the assumptions
which make the theory more probabilistically viable.\

In \cite{heffernanresnick2007}, a bivariate random vector $(X,Y)$ is
considered, where the distribution of $Y$ is in the DOA of
an extreme value distribution $G_{\gamma}$, where, for $\gamma\in
\R$,
\begin{equation}\label{eqn:Ggamma}
G_{\gamma}(x)= \exp\{-(1+\gamma x)^{-1/\gamma}\},\qquad 1 + \gamma x
> 0 .
\end{equation}
For $\gamma= 0$, the distribution function is interpreted as
$G_{0}(x)= \exp\{-\mathrm{e}^{-x}\}, x \in\mathbb{R}$.
Instead of conditioning on $Y$ being large, their theory was developed
under the equivalent assumption of the existence of a vague limit for
the modified joint distribution of
a suitably scaled and centered $(X,Y)$. The precise definition
{(Definition~\ref{def:conevm}) is given in Section~\ref{subsec:modelsetup}
and} defines
{the \textit{conditional extreme value} (CEV) model. The CEV model
differs from classical MEVT and does not assume that the distribution
of $(X,Y)$ is in a multivariate DOA.
Only} one of the marginal distributions is assumed
to be in the univariate DOA of an extreme value
distribution.

{The CEV model is useful in two contexts. In the first, the MEV model
holds, but asymptotic independence makes it difficult to compute
probabilities of risk regions; in this case, the CEV model, if
applicable, provides a supplementary assumption to the MEV model
and thus may provide better risk estimates. Therefore, both the MEV and
CEV models are assumed to hold. This is the way in which hidden regular
variation may be used; see \cite{resnick2008} for some
background. In the other context, we do not assume that $(X,Y)$ is in a
multivariate domain of attraction and the CEV assumptions may still
hold; the CEV model is then a standalone model. In a study of Internet traffic data
\cite{dasresnick2009a,lopezoliverosresnick2009}, one variable was
found to be not in any
univariate DOA and hence MEVT was not applicable, but the conditional
model was still valid. }

In Section~\ref{sec:condandmult}, we complete the study of the
relationship between multivariate extreme value
theory and conditioned limit theory begun in \cite{heffernanresnick2007}.
The connection is through the
theory of regular variation on cones. The defining relation of MEVT
can be standardized to produce standard regular variation on the cone
$[0,\infty]^2\setminus\{(0,0)\}$. The limit relation in conditioned
limit theory can \textit{sometimes} be standardized to regular
variation on the smaller cone $[0,\infty]\times(0,\infty]$. We
explain the
precise circumstances when the CEV model can be standardized to
regular variation.

Section~\ref{sec:consistency} studies a consistency question for
conditional models related to one raised in
\cite{heffernantawn2004} {and its discussion} following the paper. In practice, for a
vector $(X,Y)$, one has a choice of whether to condition on $X$ being
large or $Y$ being large and, depending on the choice, different
models are potentially
possible. We show that if conditional approximations are
possible no matter which variable is chosen as the conditioning
variable, then, in fact, the joint distribution is in a classical
multivariate DOA of an extreme value law.
A related issue is when the CEV model
can be extended to a classical MEV model;
Section~\ref{sec:extension}
provides conditions for this.
Section~\ref{sec:CEVHRV} relates \textit{hidden regular
variation} \cite{resnick2002a,maulikresnick2005} and the CEV
model under the assumption of multivariate extreme value DOA
for $(X,Y)$ with asymptotic independence. Finally, Section~\ref
{sec:examples} presents some
examples in order to demonstrate features of the conditioned models
and the final section supplies some deferred proofs.\looseness=-1

\subsection{Notation}\label{subsec:glossary}

Below, we list some commonly used notation and provide some references.
\[
\begin{array}{llll}
\R_{+}^d            &\quad [0,\infty)^d. \mbox{ Also, similarly denote } \overline{\R}_{+}^d = [0,\infty]^d, \overline{\R}^d= [-\infty,\infty]^d. \\
\E^*                &\quad\mbox{A nice subset of the compactified finite-dimensional Euclidean space,}\\
                    &\quad\mbox{often denoted $\E$ with different subscripts and superscripts, as required.}
\\
\Ecal^*             &\quad\mbox{The Borel $\sigma$-field of the subspace $\E^*$. }\\
\mathbb{M}_+(\E^*)  &\quad\mbox{The class of Radon measures on Borel subsets of $\E^*$.}
\\
f^\leftarrow &\quad\mbox{The left-continuous inverse of a monotone function $f$. }\\
&\quad\mbox{For an increasing function, $\finv(x) = \inf\{y\dvtx  f(y) \ge x\}$.} \\
&\quad\mbox{For a decreasing function, $\finv(x) = \inf\{y\dvtx  f(y) \le x\}$.}
\\
RV_\rho&\quad \mbox{The class of regularly varying functions with index
$\rho$; see \cite
{seneta1976,binghamgoldieteugels1987,gelukdehaan1987,dehaanferreira2006,resnickbook2008}.}\\
[2mm]
\Pi&\quad \mbox{{The class of }$\Pi$-varying functions; see
\cite{binghamgoldieteugels1987,resnickbook2008}}.
\\
\E^{(\gamma)} &\quad \{x\dvtx 1+\gamma x>0\} \mbox{ for } \gamma\in\R.\\
\overline{\E}^{(\gamma)} &\quad \mbox{The closure on the right of the
interval
$\E^{(\gamma)}$.}\\
\overline{\overline{\E}}{}^{(\gamma)} &\quad \mbox{The closure on both
sides of the interval
$\E^{(\gamma)}$.}\\
\E^{(\lambda,\gamma)} &\quad \overline{\overline{\E}}{}^{(\lambda)} \times
\overline{\overline{\E}}{}^{(\gamma)}\setminus
\{(-\frac{1}{\lambda},-\frac{1}{\gamma})\}.\\
\E&\quad \mbox{Usually }[0,\infty]^2\setminus\{\bzero\}.\\
\E_0 &\quad \mbox{Usually }(0,\infty]^2.\\
\Enb&\quad [0,\infty]\times(0,\infty]. \mbox{ Similarly, } \Enl= (0,\infty
]\times[0,\infty].\\
\stackrel{v}{\to} &\quad \mbox{Vague convergence of measures; see \cite
{neveu1977,kallenberg1983}}.\\
G_\gamma&\quad \mbox{An extreme value distribution given by
\eqref{eqn:Ggamma} with parameter $\gamma\in\R$. }\\
D(G_\gamma) &\quad \mbox{The DOA of the extreme value
distribution $G_\gamma$; in other words, the set of
$F$'s }\\
{}&\quad\mbox{ satisfying
\eqref{eqn:doa}.
For
$\gamma>0$, $F \in D(G_\gamma)$
is
equivalent to $1-F \in RV_{-1/\gamma}$.}\\

\end{array}
\]
%

\subsection{Model setup and basic assumptions} \label{subsec:modelsetup}

Our model assumptions follow those of \cite{heffernanresnick2007}.
\begin{Definition}[(Conditional extreme value model)]\label{def:conevm}
Suppose that $(X,Y) \in\R^2$ is a random
vector and that there exist functions $\alpha(t) > 0, a(t) > 0$,
$\beta(t), b(t) \in\R$, a constant $\gamma\in\R$ and a non-null
Radon measure $\mu$ on Borel subsets of $[-\infty,\infty] \times
\overline{\E}^{(\gamma)} $ such that
\begin{eqnarray}
 (\mathrm{a})\quad t\P\biggl( \biggl(\frac{X- \beta(t)}{\alpha(t)}, \frac{Y - b(t)}{a(t)} \biggr)
\in \cdot \biggr) \cnvg\mu(\cdot) \qquad\mbox{in } \bM_{+}\bigl([-\infty,\infty]
\times
\overline{\E}^{(\gamma)} \bigr). \label{eqn:cond2}
\end{eqnarray}
Assume that $\mu$ satisfies the following \textit
{conditional non-degeneracy conditions}: for each $y \in\E^{\gamma}$,
\begin{eqnarray}
&& (\mathrm{b})\quad \mu\bigl([-\infty,x]\times(y,\infty]\bigr) \mbox{ is not a degenerate distribution in $x$}, \label{eqn:nondegx}\\
&& (\mathrm{c})\quad \mu\bigl([-\infty,x]\times(y,\infty]\bigr) < \infty.\label{eqn:munotinf}
\end{eqnarray}
Additionally, we assume that
\begin{eqnarray}
&& (\mathrm{d})\quad H(x):= \mu\bigl([-\infty,x]\times(0 ,\infty]\bigr) \mbox{ is a probability
distribution.} \label{eqn:Hispdf}
\end{eqnarray}
We say that $(X,Y)$ follows a \textit{conditional extreme value
model} (abbreviated CEV model) if conditions
\eqref{eqn:cond2}--\eqref{eqn:Hispdf} hold.
We write $(X,Y) \in\mathit{CEV}(\alpha,\beta,a,b,\gamma)$
and often ignore the parameters for generic usage.
\end{Definition}

Since convergence in \eqref{eqn:cond2} holds in $\bM_{+}([-\infty,\infty
] \times
\overline{\E}^{(\gamma)} )$, it also holds
without the $X$ variable and so if the marginal distribution of $Y$ is
$F$, then
$F \in D(G_{\gamma})$ for $\gamma\in
\R$, as defined in \eqref{eqn:Ggamma}; that is, as $t \to\infty$,
\begin{eqnarray}
t\bigl(1-F\bigl(a(t)y + b(t)\bigr)\bigr) = t\P \biggl(\frac{Y-b(t)}{a(t)} > y \biggr)
\to(1 + \gamma
y)^{-1/\gamma},\qquad 1+\gamma y > 0. \label{eqn:doa}
\end{eqnarray}
Also, conditions \eqref{eqn:cond2}, \eqref{eqn:nondegx} and
\eqref{eqn:munotinf} {imply that if $(x,0)$ is a continuity point of
$\mu(\cdot)$, then}
\begin{eqnarray}
\P\biggl(\frac{X-\beta(t)}{\alpha(t)} \le x \Big| Y > b(t) \biggr) \to H(x) =
\mu\bigl([-\infty,x]\times(0 ,\infty]\bigr)\qquad \mbox{as } t \to\infty,
\end{eqnarray}
that is, a conditioned limit holds. This accounts for the name \textit
{conditional
extreme value model} {and we can think of $Y$, the variable in a
univariate DOA, as the \textit{conditioning
variable}.}
Under the above assumptions, a
convergence to types argument \cite{heffernanresnick2007}
yields properties of the scaling and centering
functions: there exist functions $\psi_1,
\psi_2\dvtx  \R_{+} \mapsto\R$ such that for $c>0$,
\begin{eqnarray}
\lim_{t \to\infty} \frac{\alpha(tc)}{\alpha(t)} =
\psi_1(c), \qquad\lim_{t \to\infty}
\frac{\beta(tc)-\beta(t)}{\alpha(t)} = \psi_2(c).
\label{eqn:condpsi1psi2}
\end{eqnarray}
{This implies that $\psi_1(c) = c^{\rho}$ for some $\rho
\in\R$ (\cite{dehaanferreira2006}, Theorem B.1.3), and either
$\psi_2 \equiv0$ or $\psi_2(c) = k (c^{\rho}-1)/{\rho}$ for some
$k \neq
0$ (\cite{dehaanferreira2006}}, Theorem B.2.1).

\subsection{Comparison with the model proposed by Heffernan and Tawn}\label{subsec:ht}

The model discussed in \cite{heffernanresnick2007} was motivated by
ideas of Heffernan and Tawn \cite{heffernantawn2004}. The basic
premise is that in classical MEVT, probabilities of extreme sets
(values which are very high or very low) are calculated under the
existence of a joint extreme value limit. However, in
practice, we sometimes observe that only a subset of the components is
extreme or, alternatively, we are interested in regions
where all extreme values do not occur together. Let us look at a
description of Heffernan and Tawn's model with $d=2$ for simplicity
and ease of comparison with the formulation used in this paper.
\begin{enumerate}[(1)]
\item[(1)] Assume that $\bX=(X_1,X_2)$ is a random vector with joint
distribution $F_X$ and marginal distributions $F_1$ and $F_2$. Also,
assume that we have $n$ i.i.d.~copies of $\bX$.
\item[(2)] Assume that $C$ is an extreme set in the sense that, {for any
element in $C$, at least one of its} components is extreme. Define
\begin{eqnarray*}
C_i & = C \cap\{\bx\in\R^d\dvtx  F_{i}(x_i) > F_{j}(x_j)\},\qquad
i=1,2, j\neq i.
\end{eqnarray*}
Also, define $v_{X_i}=\inf_{\bx\in C_i}(x_i), i=1,2$, and assume
that {each}
$\P(X_i > v_{X_i})$ {is} close to $1$, making the $C_i$'s extreme and
hence making $C$ an extreme set. We then write
\begin{eqnarray*}
\P(\bX\in C)  = \P(\bX\in C_1) + \P(\bX\in C_2) = \sum_{i=1}^2 \P
(\bX\in C_i | X_i > v_{X_i}) \P(X_i > v_{X_i}).
\end{eqnarray*}
\item[(3)] The $X_i$'s are marginally assumed to be extreme-valued. A
generalized Pareto distribution is fitted to each of the marginals
above a threshold, in the case above, $v_{X_i}$; see
\cite{pickands1975}. Below the threshold, the marginals are
approximated by an empirical distribution. Denote the estimate
{of the marginal distributions by} $\hat{F_{i}}$.
\item[(4)] All the marginals are transformed to Gumbel marginals using the
transformation
\[
Y_i = -\log[-\log\{\hat{F_{i}}(x_i)\}], \qquad       i=1,2.
\]
\item[(5)] In order to estimate $\P(\bY\in C_i | {Y_i} > {v_{y_i}})$ (the
transformed case), a conditioned limit is assumed, as follows: there
exist normalizing vectors $a_1(y),a_2(y),b_1(y),b_2(y) \in\R$ such that
\begin{eqnarray}\label{eqn:condHT}
\lim_{y_i\to\infty}\P\bigl(Y_j \le a_j(y_i) y_j + b_j(y_i)| Y_i =y_i \bigr) =G_i(y_j),\qquad
i=1,2, y_j \in\R.
\end{eqnarray}
\item [(6)] The parameters $a_i(y)$ and $b_i(y)$ are estimated by assuming a
parametric structural form; see \cite{heffernantawn2004} for details.
\end{enumerate}
{For} the model defined in \cite{heffernanresnick2007} {and} used in
this paper:
\begin{enumerate}[(6)]
\item[(1)] We start with the same assumption (1).
\item[(2)] We {focus} on one of the extreme sets $C_1$ and $C_2$; without
loss of generality, assume this {is}
$C_2$. {We} assume only
one of the marginals $X_2$ is extreme-valued.
In \cite{heffernantawn2004}, all of the marginals are extreme-valued.
\item[(3)] Instead of fitting an exact GPD over a threshold for the marginal
distribution, we assume that $X_2 \in D(G)$, in the sense of \eqref{eqn:doa}.
\item[(4)] Instead of Gumbel marginals, we transform to Pareto marginals
for $X_2$, which {facilitates
the use of} tools from standard
regular variation theory. Thus, the transformation here is
$X_2^{*}=1/(1-F(X_2))$ and $X_1$ remains unchanged.
\item[(5)] In order to estimate $\P((X_1,X_2^*) \in C_2 | X_2^* >
x_2)$, a conditioned limit is assumed: there exist
normalizations
$\alpha(x_2)>0, \beta(x_2)\in\R$ such that
\begin{eqnarray}\label{eqn:condHefRes}
\lim_{x_2\to\infty} \P\bigl(X_1 \le\alpha(x_2) x_1 + \beta(x_2)| X_2^*
> x_2 \bigr) = G(x_1),\qquad x_1 \in\R.
\end{eqnarray}
This is equivalent to \eqref{eqn:cond2} when $Y=X_2^*$ has been
standardized to a Pareto margin.
\end{enumerate}

A technique for estimating model parameters has been discussed in
\cite{fougeressoulier2009a}. Further, if one makes precise what
version of the conditional distribution is being used in \eqref{eqn:condHT},
then, as expected, it is shown in \cite{resnickzeber2010} that
\eqref{eqn:condHT} implies \eqref{eqn:condHefRes}, but \eqref
{eqn:condHefRes} may
hold without \eqref{eqn:condHT} holding.

\section{A consistency result for conditional extreme value models}
\label{sec:consistency}

The CEV model {defined} in Section~\ref{subsec:modelsetup} is
not symmetric {in $X$ and $Y$}. So, given bivariate data,
which component should serve as the conditioning variable?
A similar issue was raised in
\cite{heffernantawn2004} and
\cite{heffernanresnick2007}. Heffernan and
Tawn \cite{heffernantawn2004} {considered $(X,Y)$ in a multivariate
DOA with
asymptotic independence, introduced the supplementary assumption that a
conditional model was also valid and raised the question of criteria
for deciding which
variable to make the conditioning variable.}
{If either variable could be made the conditioning variable, then they
considered self-consistency of the two conditional models.
Assuming densities, they provided a natural constraint of equality of
joint limiting
densities under each model for the common region where both
models were defined. We consider a related problem without assuming that
$(X,Y)$ has a distribution in a multivariate domain.}

{Definition~\ref{def:conevm} does not assume that the distribution of
$(X,Y)$ is in a multivariate
DOA.} Suppose that
$X \sim F_X, Y\sim F_Y$. Assume that {$(X,Y) \in
\mathit{CEV}(\alpha,\beta,a,b,\gamma)$} with limit measure
$\mu_{X,Y>}(\cdot)$
and $F_Y \in D(G_{{\gamma}})$, and {also}
$(Y,X) \in \mathit{CEV}(c,d,\chi,\phi,\lambda)$ with limit measure $\mu
_{Y,X>}(\cdot)$ and $F_X \in
D(G_{\lambda})$. Assuming
{both}
conditional models {implies} that $(X,Y)$ is in the DOA
of a bivariate extreme value distribution $G$. If the
limit distribution $G$ is not a product measure,
then
$\mu_{X,Y>}$ and $\mu_{Y,X>}$ are equal up to linear transformation on
subsets that are defined on the intersection of the domains of both
measures. {Recall that if, marginally, $F_X \in
D(G_{\lambda})$ and $F_Y \in D(G_{\gamma})$, then we do not necessarily
have $(X,Y) \in D(G)$ for a bivariate extreme value distribution $G$;
see \cite{schlather2001}.}
{The precise consistency statement is next; the proof is deferred to
Section~\ref{sec:proofs}.}
%
\begin{Theorem}\label{thm:gencon}
Suppose we have a bivariate random vector $(X,Y) \in\R^2$,
non-negative functions $\alpha(\cdot), a(\cdot),$ $ \chi(\cdot), c(\cdot)$
and real-valued functions $
\beta(\cdot), b(\cdot) ,
\phi(\cdot),d(\cdot) $ such that $(X,Y)\in \mathit{CEV}(\alpha,\beta,a,b,\gamma
)$, that is,
\begin{eqnarray}
t\P\biggl[ \biggl(\frac{X-\beta(t)}{\alpha(t)},
\frac{Y-b(t)}{a(t)} \biggr) \in \cdot \biggr] & \cnvg
\mu_{X,Y>}(\cdot) \qquad\mbox{in } \mathbb{M}_{+} \bigl( [-\infty,\infty] \times
\overline{\E}^{(\gamma)} \bigr) \label{eqn:c1}
\end{eqnarray}
and $(Y,X)\in \mathit{CEV}(c,d,\chi,\phi,\lambda)$, that is,
\begin{eqnarray}
t\P\biggl( \biggl(\frac{X-\phi(t)}{\chi(t)}, \frac{Y-d(t)}{c(t)} \biggr)
\in \cdot \biggr) & \cnvg\mu_{Y,X>}(\cdot) \qquad\mbox{in } \mathbb{M}_{+} \bigl(
\overline{\E}^{(\lambda)} \times[-\infty,\infty] \bigr)
\label{eqn:c2}
\end{eqnarray}
for $\lambda,\gamma\in\R$, where both $\mu_{X,Y>} $ and $\mu_{Y,X>} $
satisfy conditional non-degeneracy conditions
corresponding to \eqref{eqn:nondegx} and \eqref{eqn:munotinf}.
Then $(X,Y)$ is in the DOA of a multivariate
extreme value distribution on $\E^{(\lambda,\gamma)}$, that is,
%
\begin{equation}\label{eqn:multDOA}
t\P\biggl( \biggl(\frac{X-\phi(t)}{\chi(t)},
\frac{Y-b(t)}{a(t)} \biggr) \in \cdot \biggr) \cnvg
\mu_{X,Y}(\cdot)\qquad \mbox{in } \mathbb{M}_{+} \bigl(\E^{(\lambda,\gamma)} \bigr),
\end{equation}
where $\mu_{X,Y}(\cdot) $ is a
non-null Radon measure on $ \E^{(\lambda,\gamma)}$.
\end{Theorem}
%
\begin{Remark}
Theorem~\ref{thm:gencon} does not impose a restriction on the scaling
and centering functions of $X$ and $Y$, {which means} that the joint
conditional convergences \eqref{eqn:c1} and \eqref{eqn:c2} impose
sufficient regularity so that $(X,Y)$ belongs to a joint
DOA. Equation \eqref{eqn:multDOA} says that $\mu_{X,Y}$ is the exponent
measure of an extreme value distribution $G$. The following are further
consequences from the proof of Theorem~\ref{thm:gencon}:
\begin{enumerate}[(1)]
\item[(1)] if $(X,Y)$ is not
asymptotically independent, then we get $\alpha\sim k_1 \chi$ and $c
\sim k_2 a$ for some non-zero constants $k_1,k_2$, hence
$\mu_{X,Y>}$ and $\mu_{Y,X>}$ are equal up to linear transformations;
\item[(2)] if $(X,Y)$ is asymptotically independent, then $\lim_{t \to\infty
}{\alpha(t)}/{\chi(t)} = 0$,
$\lim_{t \to\infty}{c(t)}/\break{a(t)} = 0$.
\end{enumerate}

\end{Remark}

\textit{Consistency: Standard regularly varying case}.
We were led to Theorem~\ref{thm:gencon} by considering the
special case of {standard}
regular variation where
$(X,Y)$ satisfies
$(X,Y) \in\mathit{CEV}(\alpha(t)=t,\beta(t)=0,a(t)=t,b(t)=0,\gamma=1),$
$(Y,X) \in\mathit{CEV}(\alpha(t)=t,\beta(t)=0,a(t)=t,b(t)=0,\gamma=1)$
and the vague convergence in \eqref{eqn:cond2} is regular variation
on the cone $\Enb= [0,\infty] \times(0,\infty]$
(\cite{davydovmolchanovzuyev2007,resnick2008}, \cite{resnickbook2007}, page~173). We can show
\cite{das2009} that if
\begin{eqnarray}
t\P[ t^{-1}( {X}, {Y}) \in \cdot ]
& \cnvg&\mu_{X,Y>}(\cdot)\qquad \mbox{in } \mathbb{M}_{+} ( \Enb
),
\label{eqn:tomuinEnb}\\
t\P[ t^{-1}( {X}, {Y}) \in \cdot ]
& \cnvg&\mu_{Y,X>}(\cdot)\qquad \mbox{in } \mathbb{M}_{+} ( \Enl
),\label{eqn:tonuinEnl}
\end{eqnarray}
where $\mu_{X,Y>} $ and $\mu_{Y,X>} $
satisfy the conditional non-degeneracy conditions
\eqref{eqn:nondegx} and \eqref{eqn:munotinf}, then
$(X,Y)$ is standard regularly varying on $\E:=[0,\infty]^2\setminus\{
\bzero\}$, that is,
\begin{eqnarray}
t\P\biggl[ \biggl(\frac{X}{t}, \frac{Y}{t} \biggr) \in \cdot \biggr]
 \cnvg\mu_{X,Y}(\cdot)\qquad \mbox{in } \mathbb{M}_{+} ( \E ),
\label{eqn:tomunuinE}
\end{eqnarray}
where $\mu_{X,Y}$ is a Radon measure on $\E$ such that
\begin{eqnarray*}
\mu_{X,Y}|_{\Enb}(\cdot) = \mu_{X,Y>}(\cdot) \qquad\mbox{on } \Enb
\quad\mbox{and}\quad
\mu_{X,Y}|_{\Enl}(\cdot) = \mu_{Y,X>}(\cdot)\qquad \mbox{on } \Enl.
\end{eqnarray*}

{A proof and discussion of} the absolutely continuous case
is in \cite{das2009}.
\begin{Example} \label{ex:ex1}
Suppose that $(X,Y)$ is a bivariate random variable with joint density
\[
f_{X,Y} (x,y) = \frac{4x}{(x^2+y)^3} + \frac{4y}{(x+y^2)^3},\qquad
x \ge1, y\ge1.
\]
The following hold as $t \to\infty$:
\begin{eqnarray*}
t^2f_X(tx) &\to&\frac{2}{x^2},\qquad t^2f_Y(ty) \to\frac{2}{y^2},\qquad x,y >0,\\
t^{5/2}f_{X,Y}\bigl(tx,\sqrt{t}y\bigr) &\to&\frac{4y}{(x+y^2)^3} =: g_1(x,y) \in L_1(\Enl),\\
t^{5/2}f_{X,Y}\bigl(\sqrt{t}x,ty\bigr) &\to&\frac{4x}{(x^2+y)^3} =: g_1(x,y) \in L_1(\Enb).
\end{eqnarray*}
This means that the conditions of Theorem~\ref{thm:gencon} hold. Note
that we here have identical Pareto marginals. We are thus led to the
analogs of \eqref{eqn:cond2} on
different cones:
\begin{eqnarray*}
 t\P\biggl(\frac{X}{\sqrt{t}} \le x, \frac{Y}{t} > y\biggr) &\to&\frac{1}{y} -
\frac{1}{y + x^2},\qquad x \ge0, y > 0,\\
 t\P\biggl(\frac{X}{t} > x, \frac{Y}{\sqrt{t}}\le y\biggr) &\to&\frac{1}{x} - \frac
{1}{x + y^2},\qquad x >0, y \ge0,\\
 t\P\biggl( \biggl(\frac{X}{t}, \frac{Y}{t} \biggr) \in([0,x]\times[0,y])^c \biggr) &\to&
\frac{1}{x} + \frac1y,\qquad x>0, y>0.
\end{eqnarray*}
\end{Example}

\section{The CEV model and standard regular variation} \label{sec:condandmult}

As remarked after Theorem~\ref{thm:gencon},
questions about the general conditional model are effectively
analyzed by starting with standard regular variation on the
cones $\Enl$ or $\Enb$. It is {theoretically} useful to know when
standardization of the conditional extreme value model is
possible. A partial answer appears in
\cite{heffernanresnick2007}, Section 2.4,
and we consider this issue in more detail, {starting with a
review and definition of \textit{standardization}
\cite{dehaanferreira2006,dehaanresnick1977,resnickbook2007,resnickbook2008}.}

\subsection{Standardization}

Standardization is the process of marginally transforming a random
vector $\bX$ into a different vector $\bZ^*$,
$\bX\mapsto\bZ^*,$
so that the distribution of $\bZ^*$ is standard regularly varying
on a cone $\E^*$; that is,
for some Radon measure $\mu^{*}(\cdot)$,
%
\begin{equation}\label{eqn:standardizeTheSucker}
t\P[t^{-1} {\bZ^*} \in \cdot ]
\stackrel{v}{\to} \mu^{*}(\cdot)\qquad \mbox{in }\mathbb{M}_+(\E^*
).
\end{equation}
Depending on the cone, one or more components of $\bZ^*$
are asymptotically Pareto. For classical multivariate
extreme value theory, each component is asymptotically Pareto and
$\E^* =\E=[\bzero,\binfty]\setminus\{\bzero\}.$ The
technique is used in classical multivariate extreme value theory to
characterize multivariate domains of attraction and dates back to at least
\cite{dehaanresnick1977}; see also
\cite{mikosch2005,mikosch2006,dehaanferreira2006,resnickbook2007}
and \cite{resnickbook2008}, Chapter 5.
Standardization is analogous
to the copula transformation, but is
better suited to studying limit relations
\cite{kluppelbergresnick2008}.

In Cartesian coordinates, the limit measure in
\eqref{eqn:standardizeTheSucker}
has the scaling property
%
\begin{equation}\label{eqn:muscales}
\mu^{*}(c \cdot)=c^{-1} \mu^{*}(\cdot),\qquad c>0.
\end{equation}
This scaling in Cartesian coordinates translates to a product limit
when expressed in
polar coordinates. An angular measure
exists, allowing
the characterization of limits
\[
\mu^{*}\biggl\{\bx\dvtx  \Vert\bx\Vert>r,\frac{\bx}{\Vert\bx\Vert} \in\Lambda\biggr\}
=r^{-1} S(\Lambda)
\]
for Borel subsets $\Lambda$ of the unit sphere in $\E^*$.

In classical multivariate extreme value theory, $S$ is a finite
measure which we may take to be a probability measure without loss
of generality. However, when $\E^*=\Enb$,
$S$ is \textit{not} necessarily finite because absence of the
horizontal axis boundary in $\Enb$
implies the unit sphere is not compact.

Here is an explicit description of \textit{standardization}.
Suppose that $\bX= (X_1,X_2,\ldots,X_d)$ is a random vector in $\R^d$
which satisfies
\begin{eqnarray}\label{eqn:mvev}
t\P\biggl[ \biggl(\frac{X_1-\beta_1(t)}{\alpha_1(t)},\frac{X_2-\beta_2(t)}{\alpha
_2(t)},\ldots,\frac{X_d-\beta_d(t)}{\alpha_d(t)} \biggr)
\in \cdot \biggr] \cnvg\mu(\cdot)\qquad \mbox{in }
\mathbb{M}_{+}(\D)
\end{eqnarray}
for some $\D\subset\overline{\R}^d$, $\alpha_i (t)>0, \beta_i(t)\in
\mathbb{R}$ for $i=1,\ldots,d$. Suppose that we have $\boldf=
(f_1,\ldots,f_d)$ such that, for $i=1,\ldots,d$:
\begin{longlist}[(a)]
\item[(a)] $f_i\dvtx  \mbox{range of } X_i \to(0,\infty)$;
\item[(b)] $f_i$ is monotone;
\item[(c)] $\nexists K >0$ such that $|f_i| \le K$.
\end{longlist}
Then $\boldf$ \textit{standardizes} $\bX$ if $\bZ^*=\boldf
(\bX)=(f_i(X_i), i=1,\ldots,d)$ satisfies \eqref{eqn:standardizeTheSucker}.
We call $\boldf$ the \textit{standardizing function} and say
\eqref{eqn:standardizeTheSucker} is the \textit{standardization} of
\eqref{eqn:mvev}.

For the conditional model defined in Definition~\ref{def:conevm} in
Section~\ref{subsec:modelsetup},
where $F$, the distribution of $Y$, satisfies $F \in
D(G_\gamma)$, we can
always use
$b(\cdot) =({1}/(1-F))^\leftarrow
(\cdot)$ to standardize $Y$ and
$Y^{*} =b^{\leftarrow}(Y)$
is the standardization of $Y$; see \cite{heffernanresnick2007}.

\subsection{When can the conditional extreme value model be
standardized?}\label{subsec:when}

Suppose that $(X,Y)$ satisfies Definition~\ref{def:conevm} and, in
particular, \eqref{eqn:cond2} holds.
Standardization in \eqref{eqn:cond2}
is possible \textit{unless} $(\psi_1,\psi_2)= (1,0)$, which is
equivalent to the limit measure being a product measure
\cite{heffernanresnick2007}.
The converse is also true. Consequently, when the limit measure is not a
product measure, we can reduce to standard regular variation
on the cone $\Enb$ and, conversely, we can think of the general
conditional model as a transformation of standard regular variation
on $\Enb$.

We begin by showing that when we have standardized convergence on $\E
_\sqcap$,
the limit measure cannot be a product measure.
\begin{Lemma}\label{lem:stdnotprod}
Suppose that $(X,Y)$ is standard regularly varying on the cone $\Enb$
such that
\begin{eqnarray}\label{eqn:std}
t\P[ t^{-1}({X}, {Y}) \in \cdot ]
\cnvg\mu(\cdot) \qquad\mbox{in } \mathbb{M}_{+} ( \Enb
)
\end{eqnarray}
for some non-null Radon measure $\mu(\cdot)$ on $\Enb$
satisfying the conditional non-degeneracy conditions as in
\eqref{eqn:nondegx} and
\eqref{eqn:munotinf}.
Then $\mu(\cdot)$ cannot be a product measure.
\end{Lemma}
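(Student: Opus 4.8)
The plan is to argue by contradiction: suppose $\mu = \mu_1 \times \mu_2$ on $\Enb = [0,\infty]\times(0,\infty]$, where $\mu_1$ is a measure on $[0,\infty]$ and $\mu_2$ a measure on $(0,\infty]$. First I would extract the marginal behavior of $Y$. Since $\mu$ is non-null and Radon on $\Enb$, standard regular variation \eqref{eqn:std} forces $Y$ to be standard regularly varying: $t\P[Y/t > y] \to \mu([0,\infty]\times(y,\infty]) = \mu_1([0,\infty])\,\mu_2((y,\infty])$, and this limit must be finite, positive, and decreasing to $0$ as $y \to \infty$, so $\mu_1([0,\infty]) =: c_1 \in (0,\infty)$ and $\mu_2((y,\infty]) = (c_2/y)$ for an appropriate normalization (the homogeneity in Remark \ref{rem:homone} with $\alpha = 1$ pins down the Pareto form). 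In particular $\mu_2$ is a genuine (non-degenerate) measure on $(0,\infty]$ with $\mu_2(\{\infty\}) = 0$.

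Next I would look at the conditional distribution of the $X$-coordinate, which is where the conditional non-degeneracy condition \eqref{eqn:nondegx} enters. Condition \eqref{eqn:Hispdf}, or rather the non-degeneracy \eqref{eqn:nondegx} together with \eqref{eqn:munotinf}, says that $x \mapsto \mu([0,x]\times(0,\infty])$ is a non-degenerate (finite, non-constant) function of $x$. Under the product hypothesis this equals $\mu_1([0,x])\cdot\mu_2((0,\infty])$. But $\mu_2((0,\infty]) = \lim_{y\downarrow 0}\mu_2((y,\infty]) = \lim_{y\downarrow 0}(c_2/y) = \infty$. Hence for any $x$ with $\mu_1([0,x]) > 0$ we get $\mu([0,x]\times(0,\infty]) = \infty$, violating \eqref{eqn:munotinf}; and if $\mu_1([0,x]) = 0$ for all finite $x$ then $\mu_1$ is concentrated at $\{\infty\}$, making $x\mapsto\mu([0,x]\times(0,\infty])$ identically zero for finite $x$, which violates the non-degeneracy \eqref{eqn:nondegx}. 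Either way we reach a contradiction, so $\mu$ cannot be a product measure.

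The main obstacle, and the place to be careful, is handling the mass of $\mu_1$ and $\mu_2$ at the point $\infty$ and near $0$ cleanly, since $\Enb$ deliberately retains the vertical axis $\{0\}\times(0,\infty]$ but deletes the horizontal axis, so the "unit sphere" is non-compact and $\mu_1$ could a priori put mass at $\infty$. The key structural fact driving the contradiction is precisely the asymmetry of $\Enb$: the measure must be Radon on $\Enb$ (finite on sets bounded away from the deleted horizontal axis) yet the $Y$-marginal is forced to blow up like $1/y$ as $y\downarrow 0$, and a product structure would propagate that blow-up into the sets $[0,x]\times(0,\infty]$ that condition \eqref{eqn:munotinf} requires to be finite. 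I would phrase the final step as the observation that for a product measure the finiteness of $\mu([0,x]\times(y,\infty])$ for all $y>0$ is incompatible with both finiteness at $y = 0^+$ and non-degeneracy in $x$, because the $Y$-factor necessarily has infinite total mass whenever the convergence \eqref{eqn:std} is to a non-null limit.
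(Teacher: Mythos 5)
There is a genuine gap in the second half of your argument. The conditional non-degeneracy conditions \eqref{eqn:nondegx} and \eqref{eqn:munotinf} are imposed, in Definition \ref{def:conevm}, for each $y$ in the \emph{open} set $\E^{(\gamma)}$; after standardization to $\Enb$ this becomes "for each $y>0$" (the CEVM conditioning level $y=0$ in the original coordinates corresponds to $y=1$ on $\Enb$, which is why the paper works with $H^{**}(x)=\mu\bigl([0,x]\times(1,\infty]\bigr)$ in \eqref{eqn:defHStar}). Your contradiction, however, is derived entirely at the level $y=0$: you argue that a product structure forces $\mu\bigl([0,x]\times(0,\infty]\bigr)\in\{0,\infty\}$ and that the value $\infty$ "violates \eqref{eqn:munotinf}". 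But finiteness of $\mu\bigl([0,x]\times(0,\infty]\bigr)$ is not among the hypotheses, and it fails for perfectly legitimate non-product limits as well: in the paper's Examples \ref{ex:finang} and \ref{ex:infang} one has $\mu^{**}\bigl([0,x]\times(y,\infty]\bigr)\to\infty$ as $y\downarrow 0$. Worse, if one did require $0<\mu\bigl([0,x]\times(0,\infty]\bigr)<\infty$ and non-degeneracy in $x$, homogeneity of order $-1$ gives $\mu\bigl([0,cx]\times(0,\infty]\bigr)=c^{-1}\mu\bigl([0,x]\times(0,\infty]\bigr)$, a decreasing function of $x$, which forces this quantity to vanish identically; so under your reading the hypothesis class is empty and the lemma is vacuous. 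Concretely, your argument never excludes the measures $\mu_1\times\mu_2$ with $\mu_1$ a finite non-degenerate measure on $[0,\infty]$ and $\mu_2\bigl((y,\infty]\bigr)=y^{-1}$: these satisfy finiteness and non-degeneracy at every level $y>0$, and the only thing ruling them out as limits in \eqref{eqn:std} is the scaling property of $\mu$ in the $x$-direction, which your second step never uses.

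The repair is short and is exactly the paper's route. Your first step is fine: it shows that under the product hypothesis $\mu\bigl([0,x]\times(y,\infty]\bigr)=\mu_1\bigl([0,x]\bigr)c_2\,y^{-1}=:G(x)\,y^{-1}$ for $x\ge 0$, $y>0$, with $G$ finite and (by \eqref{eqn:nondegx}) non-constant. Now apply the $-1$-homogeneity of $\mu$ (Remark \ref{rem:homone}) to the sets $[0,x]\times(y,\infty]$ themselves rather than letting $y\downarrow 0$: $\mu\bigl([0,cx]\times(cy,\infty]\bigr)=c^{-1}\mu\bigl([0,x]\times(y,\infty]\bigr)$ yields $G(cx)(cy)^{-1}=c^{-1}G(x)y^{-1}$, i.e.\ $G(cx)=G(x)$ for all $c>0,x>0$, so $G$ is constant and $\mu\bigl([0,x]\times(y,\infty]\bigr)$ is degenerate in $x$, contradicting \eqref{eqn:nondegx}. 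This uses only the conditions at levels $y>0$ and the scaling structure, which is the essential ingredient missing from your write-up.
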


\begin{pf} If $\mu$ is a product measure, then we have
\begin{eqnarray}
\mu\bigl([0,x]\times(y,\infty] \bigr)  = G(x)y^{-1}\qquad \mbox{for } x\ge
0, y >0 \label{eqn:prod}
\end{eqnarray}
for some finite distribution function $G$ on $[0,\infty)$.
Now, \eqref{eqn:std} implies that $\mu$ is homogeneous
of order $-1$, that is,
\begin{eqnarray}
\mu(c\Lambda) = c^{-1} \mu(\Lambda)
\qquad\forall c
> 0, \label{eqn:stdhom}
\end{eqnarray}
where $\Lambda$ is a Borel subset of $\Enb$. Therefore, using \eqref{eqn:prod},
\begin{eqnarray*}
\mu\bigl(c([0,x]\times(y,\infty]) \bigr) = \mu\bigl([0,cx]\times(cy,\infty] \bigr)
= G(cx) {(cy)}^{-1} = c^{-1} G(cx) y^{-1}.
\end{eqnarray*}
Moreover, using \eqref{eqn:prod} and
\eqref{eqn:stdhom}, $\mu(c([0,x]\times(y,\infty]) ) =
c^{-1}G(x)y^{-1}$
and, therefore,
$ G(cx) = G(x)$ $\forall c > 0, x > 0.$
Hence, for fixed $ y \in\E^{(\gamma)}, c>0, x>0$,
\begin{eqnarray*}
\mu\bigl([0,cx]\times(y,\infty] \bigr) = G(cx)y^{-1} & = G(x)y^{-1} = \mu\bigl([0,x]\times(y,\infty] \bigr).
\end{eqnarray*}
Thus, $\mu$ becomes a degenerate distribution in $x$,
contradicting our conditional non-degeneracy assumptions and, consequently,
$\mu(\cdot)$ cannot be a product measure.
\end{pf}

Suppose we have a general CEV model as
in Definition~\ref{def:conevm} with product limit measure.
We show this CEV model cannot be \textit{standardized} to
regular variation on some cone $\C\subset\E$ ($\C= \Enb$
in our case). Since Definition~\ref{def:conevm} implies that
$Y$ can always be standardized, in the following, we assume that $Y^*$ is
the standardized version of $Y$ and we only consider the problem of
standardizing $X$.
\begin{Theorem}\label{thm:productsBad}
Suppose that $X \in\R, Y^{*} > 0$ are random variables such that for
functions $\alpha(\cdot) > 0, \beta(\cdot) \in\R$, we have, as $t \to
\infty$,
\begin{eqnarray}
t\P\biggl[ \biggl(\frac{X-\beta(t)}{\alpha(t)},
\frac{Y^{*}}{t} \biggr) \in \cdot \biggr]  \cnvg
G \times\nu_1(\cdot)\qquad \mbox{in } \mathbb{M}_{+} \bigl(
[-\infty,\infty] \times(0,\infty]
\bigr),\label{eqn:nonstdind}
\end{eqnarray}
where $\nu_1(x,\infty] =
x^{-1} $, $x>0$, and $G$ is some finite, non-degenerate distribution on
$\R$.
Then there does not exist a standardizing function,
$f(\cdot)\dvtx \mbox{range of } X \mapsto(0, \infty)$, in the sense of the
discussion
after \eqref{eqn:mvev}, such that
%
\begin{equation}
t\P[t^{-1}({f(X)}, {Y^{*}}) \in \cdot ] \cnvg\mu(\cdot)\qquad \mbox{in }
\mathbb{M}_{+} ( \Enb
), \label{eqn:fstd}
\end{equation}
where $\mu$ satisfies the conditional non-degeneracy conditions.
\end{Theorem}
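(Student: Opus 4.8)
The plan is to argue by contradiction: suppose such a standardizing function $f$ exists, so that \eqref{eqn:fstd} holds with a limit measure $\mu$ on $\Enb$ satisfying the conditional non-degeneracy conditions. By Lemma \ref{lem:stdnotprod}, $\mu$ cannot be a product measure. On the other hand, I will show that the hypothesis \eqref{eqn:nonstdind}, in which the limit for $(X,Y^*)$ is the product $G\times\nu_1$, forces any standardization on $\Enb$ to again be a product measure, giving the contradiction. The guiding intuition is that $X$ and $Y^*$ are ``asymptotically independent'' under the normalization $(\alpha(t),\beta(t))$, and applying a monotone unbounded transformation $f$ to $X$ only cannot create dependence in the limit.

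First I would record what \eqref{eqn:nonstdind} gives about $X$ alone: marginalizing out $Y^*$, the sequence $(X-\beta(t))/\alpha(t)$ converges in the sense that $t\P[(X-\beta(t))/\alpha(t)\le x]$ has a limit related to $G$ — more precisely, since $G\times\nu_1$ is Radon on $[-\infty,\infty]\times(0,\infty]$ and $\nu_1(y,\infty]=y^{-1}$, we get $t\P[(X-\beta(t))/\alpha(t)\le x,\, Y^*/t>y]\to G(x)/y$. The key structural fact to extract is that for continuity points, the joint probability factors asymptotically. Now suppose $f$ standardizes $X$; by Definition \ref{def:stdize}, $f$ is monotone and unbounded, and $t\P[f(X)/t\le u]$ behaves like (a constant times) the lower tail appropriate to $\Enb$. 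I would use the convergence-to-types / Vervaat-type argument (as in \cite{heffernan:resnick:2007}, Proposition 1, already invoked in the proof of Theorem \ref{thm:gencon}) to relate the normalization $t\mapsto f^{\leftarrow}(ut)$-type scaling of $X$ back to $\alpha(t),\beta(t)$: that is, there must exist $\psi_1,\psi_2$ with $\alpha(tc)/\alpha(t)\to\psi_1(c)=c^{\rho}$ and $(\beta(tc)-\beta(t))/\alpha(t)\to\psi_2(c)$, and the standardizing map $f$ is (asymptotically) an affine-power transformation of the $(X-\beta(t))/\alpha(t)$ scale.

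The heart of the argument is then to compute the joint limit $\mu$ in \eqref{eqn:fstd} in terms of the joint limit in \eqref{eqn:nonstdind}. Writing $\{f(X)/t > u\}$ as an event of the form $\{(X-\beta(t))/\alpha(t) > g_t(u)\}$ for suitable $g_t$ (using monotonicity of $f$), and with $g_t(u) \to g(u)$ by the convergence-to-types step, I get
\begin{align*}
t\P\Bigl[\frac{f(X)}{t}\le u,\ \frac{Y^*}{t}>y\Bigr]
= t\P\Bigl[\frac{X-\beta(t)}{\alpha(t)}\le g_t(u),\ \frac{Y^*}{t}>y\Bigr]
\to \frac{G(g(u))}{y},
\end{align*}
so $\mu([0,u]\times(y,\infty]) = \tilde G(u)\, y^{-1}$ with $\tilde G(u) := G(g(u))$. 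Thus $\mu$ is a product measure, contradicting Lemma \ref{lem:stdnotprod} (whose hypotheses — standard regular variation on $\Enb$ with the conditional non-degeneracy conditions — are exactly what \eqref{eqn:fstd} assumes). Hence no standardizing $f$ can exist.

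The main obstacle I anticipate is the rigorous justification of the step ``$\{f(X)/t>u\} = \{(X-\beta(t))/\alpha(t) > g_t(u)\}$ with $g_t(u)\to g(u)$ a genuine (finite, non-degenerate) limit.'' This requires care because $f$ is only assumed monotone and unbounded (Definition \ref{def:stdize}(a)--(c)), so one must handle both increasing and decreasing $f$, deal with the generalized inverse $f^{\leftarrow}$, and show the resulting normalization of $X$ is tail-equivalent to $(\alpha(t),\beta(t))$ up to the types constants $(\psi_1,\psi_2)$ — this is precisely where one leans on the \Pi-variation / regular-variation machinery in the Appendix and the convergence-to-types lemma. One also has to rule out the degenerate possibilities (e.g.\ $g(u)$ constant, which would make $\mu$ degenerate in the first coordinate and violate \eqref{eqn:nondegx} directly) and confirm that the limiting $\tilde G$ inherits non-degeneracy from $G$ so that Lemma \ref{lem:stdnotprod} genuinely applies. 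Everything else is bookkeeping with vague convergence on the cone $\Enb$.
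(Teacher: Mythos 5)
Your proposal is correct and follows essentially the same route as the paper: assume $f$ exists (WLOG monotone), rewrite the event $\{f(X)/t\le x\}$ via the generalized inverse as $\{(X-\beta(t))/\alpha(t)\le (f^{\leftarrow}(xt)-\beta(t))/\alpha(t)\}$, use the finiteness and non-degeneracy of $\mu$ (a convergence-to-types step) to get $(f^{\leftarrow}(xt)-\beta(t))/\alpha(t)\to h(x)$ with at least two points of increase, conclude $\mu([-\infty,x]\times(y,\infty])=G(h(x))y^{-1}$ is a product, and contradict Lemma \ref{lem:stdnotprod}. The step you flag as the main obstacle is exactly the one the paper resolves by this types argument, so no new idea is needed.
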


\begin{pf}
Note that $Y^{*}$ is already standardized here. Suppose that there
exists a standardization function $f(\cdot) $ such that \eqref
{eqn:fstd} holds. Without loss of generality,
assume $f(\cdot)$ to be non-decreasing. This implies that for $\mu
$-continuity points $(x,y)$, we have
\begin{eqnarray*}
 t\P[{f(X)} \le{t} x, {Y^{*}}> {t}y] \to
\mu\bigl((-\infty,x]\times(y,\infty] \bigr)       \qquad           (t\to\infty),
\end{eqnarray*}
which is equivalent to
\begin{eqnarray}\label{eqn:muxy}
\hspace*{-10pt} t\P
\biggl(\frac{X-\beta(t)}{\alpha(t)} \le
\frac{f^{\leftarrow}(xt)-\beta(t)}{\alpha(t)}, \frac{Y^{*}}{t}>
y \biggr] \to\mu\bigl((-\infty,x]\times(y,\infty] \bigr)       \qquad       (t\to\infty).
\end{eqnarray}
Since $\mu((-\infty,x]\times(y,\infty]) < \infty$ and is
non-degenerate
in $x$, we have, as $t \to\infty$, that
\begin{eqnarray}\label{eqn:finvg}
\bigl(f^{\leftarrow}(xt)-\beta(t) \bigr)/\alpha(t) &\to h(x)
\end{eqnarray}
for some non-decreasing function $ h(\cdot)$ which has at least two
points of increase. Thus, \eqref{eqn:muxy} and \eqref{eqn:finvg} imply that
$\mu((-\infty,x]\times(y,\infty]) = G(h(x)) \times y^{-1}.$
Hence, $\mu(\cdot)$ is a product measure which, by Lemma
\ref{lem:stdnotprod}, is not possible.
\end{pf}

\textit{Summary}.
{There follows a summary} {describing} when standardization is
possible and the relationship of standardization to the limit measure
being a
product. Part 2 is proved in Section
\ref{sec:proofs}.
Statistical methods for detecting when a CEV model is appropriate and
whether the limit measure is a product are given in
\cite{dasresnick2009a}:

\begin{enumerate}[(1)]
\item[(1)] Suppose that $(X,Y)$ satisfy Definition
\ref{def:conevm} so that the limits in \eqref{eqn:condpsi1psi2} hold.
If $(\psi_1,\psi_2) \ne(1,0)$,
then there exists a standardiz{ing} function $\boldf= (f_1,f_2)$
such that $(X^*,Y^*)=(f_1(X),f_2(Y))$ is standard regularly varying on
$\Enb$,
\[
t\P[ t^{-1}
({f_1(X)}, {f_2(Y)}) \in \cdot ] \cnvg\mu^{**}(\cdot)
\qquad\mbox{in } \mathbb{M}_{+} ( \Enb
)
\]
and $\mu^{**}$ is a non-null Radon measure satisfying the conditional
non-degeneracy conditions.

\item[(2)] Conversely, suppose that we have a bivariate random vector $(X^*,Y^*)
\in\R^2_{+}$ satisfying
\[
t\P[t^{-1}({X^*}, {Y^*}) \in \cdot ]
\cnvg\mu^{**}(\cdot) \qquad\mbox{in } \mathbb{M}_{+} ( \Enb
),
\]
where $\mu^{**}$ is a non-null Radon measure satisfying the
conditional non-degeneracy conditions. Consider functions
$\alpha(\cdot) > 0 , \beta(\cdot) \in
\R$ such that \eqref{eqn:condpsi1psi2} holds with
$(\psi_1,\psi_2) \ne(1,0)$. There then
exist functions $a(\cdot)
> 0, b(\cdot) \in\mathbb{R} $ satisfying
\eqref{eqn:doa} and
$ \lambda(\cdot)\in\R$, $\gamma\in\mathbb{R}$ such that
\begin{eqnarray}
\hspace*{-29pt}t\P\biggl[ \biggl(\frac{\lambda(X^*)-\beta(t)}{\alpha(t)},
\frac{b(Y^*)-b(t)}{a(t)} \biggr) \in \cdot \biggr]  \cnvg
\tilde{\mu}(\cdot) \qquad\mbox{in } \mathbb{M}_{+} \bigl( [-\infty,\infty] \times
\overline{\E}^{(\gamma)} \bigr), \label{eqn:trans}
\end{eqnarray}
where $\tmu$ is a non-null Radon measure in
$[-\infty,\infty] \times\overline{\E}^{(\gamma)}
$ satisfying the conditional non-degeneracy conditions and $b(Y^*) \in
D(G_{\gamma})$.
\end{enumerate}
\begin{Remark}\label{rem:standardizationLives}
The previous summary applies to attempts to produce a standard pair by
marginal transformations. If one waives the requirement that only
marginal transformations be used, more is possible.
Suppose that $H$ is a non-degenerate probability and, in
$\mathbb{M}_{+} ([-\infty,\infty]\times(0,\infty])$,
\begin{eqnarray*}
t\P\biggl[ \biggl( \frac{X-\beta(t)}{\alpha(t)} , \frac{Y^{*}}{t}
\biggr) \in \cdot \biggr] \cnvg H \times
\nu_1(\cdot).
\end{eqnarray*}
Define
$X^{*} = ({(X-\beta(Y^{*}))Y^{*}} )/{\alpha(Y^{*})}.$
Then \cite{heffernanresnick2007} in $\mathbb{M}_{+} ([-\infty,\infty
]\times(0,\infty])$,
\[
t\P\biggl( \frac{X^{*}}{t} \le x , \frac{Y^{*}}{t}> y \biggr) \to\int_{0}^{1/y} H(xv)
\,\mathrm{d}v=\frac1x \int_0^{x/y} H(s)\,\mathrm{d}s \qquad (t\to\infty).
\]
The limit measure is homogeneous of order $-1$ and, thus, a
transformation of $(X,Y^*)$ to a standard regularly varying pair
exists, even when we have a limit
measure which is a product. Note that this transformation
{ is more complex than just a marginal
transformation and
is not in
the sense of the discussion after \eqref{eqn:mvev}.}
\end{Remark}

\subsection{A characterization of regular variation on $\E_\sqcap$}
The CEV model with limit measure which is not a product can always be
standardized to give regular variation on $\Enb$, so we would like
useful characterizations of such regular variation.
Standard regular variation on $\E$ was characterized by
\cite{dehaan1978} in terms of one-dimensional regular variation of
max-linear combinations and
\cite{resnick2002a} provides a characterization of hidden regular
variation in $\E$ and $\E_0$ in terms of max- and min-linear
combinations of the random vector. The following are comparable results for
$\Enb$.
\begin{Proposition} \label{prop:charEsqcap}
Suppose that $(X,Y) \in{\R_+^2}$ is a random vector and {$X>0$}
almost surely. The following are equivalent:
\begin{enumerate}[(1)]
\item[(1)]$(X,Y) $ is standard multivariate
regularly varying on $\Enb$ with limit measure satisfying the
non-degeneracy conditions \eqref{eqn:nondegx} and \eqref{eqn:munotinf};
\item[(2)] for all $a \in(0,\infty]$, we
have
%
\begin{equation}\label{eqn:charchar}
\lim_{t \to\infty} t \P\bigl( t^{-1} {\min(aX,Y)} > y \bigr) =c(a) y^{-1},\qquad y>0,
\end{equation}
for some non-constant, non-decreasing function $c\dvtx  (0,\infty] \to
(0, \infty)$.
\end{enumerate}
\end{Proposition}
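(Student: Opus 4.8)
The plan is to prove the two implications separately, writing $\mu_t:=t\P[(X/t,Y/t)\in\cdot\,]$ and using the elementary identities $\{\min(aX,Y)>y\}=\{X>y/a,\,Y>y\}$ for $a\in(0,\infty)$, $y>0$, and $\{\min(\infty\cdot X,Y)>y\}=\{X>0,\,Y>y\}$ (where $\P(X=0)=0$ lets us discard the event $X=0$). Thus hypothesis (2) is precisely a list of statements about the $\mu_t$-masses of the upper rectangles $(u,\infty]\times(v,\infty]$, $u,v>0$, and of $[0,\infty]\times(v,\infty]$, $v>0$. For $(1)\Rightarrow(2)$: by Remark \ref{rem:homone} the limit $\mu$ of \eqref{eqn:mvrcone} is homogeneous of order $-1$ on $\Enb$; homogeneity together with Radonness forces $\mu$ to charge no horizontal line $[0,\infty]\times\{v\}$ (otherwise infinitely many disjoint lines of comparable positive mass would sit inside the compact strip $[0,\infty]\times[v,2v]$), and hence, via $\mu([0,x]\times(v,\infty])=x^{-1}\mu([0,1]\times(v/x,\infty])$, no punctured vertical line $\{x_0\}\times(0,\infty]$ with $x_0>0$ either. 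Consequently every rectangle $(y/a,\infty]\times(y,\infty]$ and $[0,\infty]\times(y,\infty]$ is a relatively compact $\mu$-continuity set, and \eqref{eqn:mvrcone} produces the limits in (2) with $c(a):=\mu((1/a,\infty]\times(1,\infty])$ for $a<\infty$ and $c(\infty):=\mu([0,\infty]\times(1,\infty])$. Monotonicity of $c$ is nestedness of rectangles; finiteness is Radonness on the compact set $[0,\infty]\times[1,\infty]$; writing $\mu([0,x]\times(1,\infty])=c(\infty)-c(1/x)$, condition \eqref{eqn:nondegx} makes $c$ non-constant, and if $c(a_0)=0$ for some finite $a_0$ then homogeneity forces $\mu$ to vanish on $\{x>c/a_0,\,y>c\}$ for every $c>0$, hence on all of $\E_0$, leaving $\mu$ supported on the $y$-axis and degenerate in $x$ — excluded by \eqref{eqn:nondegx}; so $c>0$ on $(0,\infty]$.

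For $(2)\Rightarrow(1)$: reparametrising by $a=v/u$, $y=v$, hypothesis (2) reads $t\P(X>ut,Y>vt)\to G(u,v):=v^{-1}c(v/u)$ for all $u,v>0$, together with $t\P(X>0,Y>vt)\to v^{-1}c(\infty)$. First I would prove tightness of $\{\mu_t\}$ in $\mathbb{M}_{+}(\Enb)$: any compact $K\subset\Enb$ lies in some $[0,\infty]\times[\delta,\infty]$, and $\mu_t([0,\infty]\times[\delta,\infty])=t\P(X\ge0,\,Y\ge\delta t)\le t\P(X>0,\,Y>\delta t/2)+t\P(X=0)\to 2c(\infty)/\delta$, so $\sup_t\mu_t(K)<\infty$. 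Next I would identify all vague subsequential limits: if $\mu_{t_n}\cnvg\mu$, then since the monotone $c$ has only countably many discontinuities $D$, for every $u,v>0$ with $v/u\notin D$ a sandwich of $(u,\infty]\times(v,\infty]$ between the compact sets $[u\pm\epsilon,\infty]\times[v\pm\epsilon,\infty]$ and the open relatively compact sets $(u\pm\epsilon,\infty]\times(v\pm\epsilon,\infty]$, combined with the full-sequence convergence to $G$ and the continuity of $G$ at $(u,v)$, yields $\mu((u,\infty]\times(v,\infty])=G(u,v)$. These rectangles determine $\mu$ on $\E_0=(0,\infty]\times(0,\infty]$; the remaining mass is then forced, because for all but countably many $v$ the set $[0,\infty]\times(v,\infty]$ is a $\mu$-continuity set with $\mu([0,\infty]\times(v,\infty])=v^{-1}c(\infty)$, whence $\mu(\{0\}\times(v,\infty])=v^{-1}c(\infty)-\mu((0,\infty]\times(v,\infty])$ is pinned down for a.e.\ $v$ and hence, by monotone limits, for all $v$. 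Thus the subsequential limit is unique, $\mu_t\cnvg\mu$ on $\Enb$; $\mu$ is non-null, and $\mu([0,x]\times(v,\infty])=v^{-1}(c(\infty)-c(v/x))$ is finite and non-constant in $x$ because $c$ is non-constant, so the non-degeneracy conditions \eqref{eqn:nondegx}--\eqref{eqn:munotinf} hold.

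I expect the main difficulty to be the reconstruction step in the converse: one must see that the bare existence of the one-dimensional limits $c(a)$ already manufactures a genuine Radon measure on $\Enb$. The non-negativity of the inclusion--exclusion combinations of $G$ over rectangles — without which no limiting measure on $\E_0$ exists — is automatic, being a limit of the non-negative quantities $t\P(u_1t<X\le u_2t,\ v_1t<Y\le v_2t)$, but it must be invoked explicitly; and the behaviour on the axis $\{x=0\}$, where $\mu_t$-mass near $x=0$ can concentrate in the limit, is the genuinely delicate point, and is exactly where the separate $a=\infty$ hypothesis (and $\P(X=0)=0$) does its work, since the finite-$a$ rectangles alone control $\mu$ only on the open quadrant $\E_0$. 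By contrast $(1)\Rightarrow(2)$ is routine once the homogeneity facts about coordinate lines are established.
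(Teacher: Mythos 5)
Your proof is correct, and in substance it rests on the same two pillars as the paper's: the identity $\{\min(aX,Y)>ty\}=\{X>ty/a,\,Y>ty\}$ (with the $a=\infty$ case handled through $\P(X=0)=0$), and the representation $c(a)=\mu\bigl((1/a,\infty]\times(1,\infty]\bigr)$ obtained from the $-1$-homogeneity of the limit. Where you diverge is in the packaging of the converse: the paper simply writes $t\P(X/t\le x,\,Y/t>y)$ as the difference of the two ``min'' probabilities with $a=\infty$ and $a=y/x$, reads off the limit $y^{-1}\bigl(c(\infty)-c(y/x)\bigr)$ for every $x\ge 0$, $y>0$, and then invokes (implicitly) the standard fact that convergence of these distribution-function-type quantities on the generating sets $[0,x]\times(y,\infty]$ yields vague convergence on $\Enb$; you instead prove vague relative compactness of $\{t\P[(X/t,Y/t)\in\cdot\,]\}$ and identify all subsequential limits from the upper rectangles plus the strip $[0,\infty]\times(y,\infty]$, with the usual continuity-point caveats. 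Both finishing devices are standard; yours is longer but more self-contained, and it makes explicit two points the paper glosses over, namely that the limiting set function is automatically a (Radon) measure (nonnegativity of the inclusion--exclusion combinations as limits of probabilities, finiteness on compacta from the $a=\infty$ hypothesis) and that in the direct implication the sets $(y/a,\infty]\times(y,\infty]$ and $[0,\infty]\times(y,\infty]$ are genuine $\mu$-continuity sets, because homogeneity plus Radonness forbids mass on horizontal lines and on vertical lines $\{x_0\}\times(0,\infty]$ with $0<x_0<\infty$; your observation that positivity of $c$ must also be checked (via homogeneity, $c(a_0)=0$ for finite $a_0$ would force $\mu(\E_0)=0$ and hence degeneracy) is a worthwhile refinement of the paper's one-line appeal to the non-degeneracy conditions \eqref{eqn:nondegx} and \eqref{eqn:munotinf}.
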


\begin{pf}
$(2) \Rightarrow(1)$:
Assume that \eqref{eqn:charchar} holds
for some function $c\dvtx  (0,\infty] \to(0, \infty)$. Then, for $x \ge
0, y > 0$,
\begin{eqnarray*}
t\P\biggl( \frac{X}{t} \le x , \frac{Y}{t} > y \biggr)
& =& t\P\biggl(\frac{Y}{t} > y \biggr) - t\P\biggl( \frac{X}{t} > x , \frac{Y}{t} > y\biggr) \\
& =& t\P( X > 0,{Y} > ty ) - t\P\bigl( {(y/x)X} > ty , {Y} >ty\bigr) \\
& =& t\P\bigl({\min(a_1X,Y)} > ty \bigr) - t\P\bigl({\min\bigl((y/x)X,Y\bigr)} >t y \bigr) \qquad (a_1 :=\infty)\\
& \to& c(\infty) y^{-1} - c(y/x) y^{-1} \qquad (t \to\infty)\\
& =:&\mu([0,x]\times(y,\infty]).
\end{eqnarray*}
Since $c(\cdot)$ is non-decreasing and non-constant, $\mu$ is a
non-null Radon measure on $\Enb$ and we have our result. The
non-degeneracy of $\mu$ follows from the fact that $c(\cdot)$ is a
non-constant function.

$(1) \Rightarrow(2)$: Assume now that $(X,Y)$ is standard
multivariate regularly varying on $\Enb$. Hence, there exists a
non-degenerate Radon measure $\mu$ on $\Enb$ such that
\begin{eqnarray*}
\lim_{t \to\infty} t\P\biggl( \frac{X}{t} \le x , \frac{Y}{t} > y \biggr)  = \mu\bigl([0,x]\times(y,\infty]\bigr)
\end{eqnarray*}
and, for any $a \in(0, \infty]$,
\begin{eqnarray*}
t\P\biggl( \frac{\min(aX,Y)}{t} > y \biggr)
& =& t\P\biggl( \frac{X}{t} >\frac{y}{a} , \frac{Y}{t} > y \biggr)\to\mu\biggl(\biggl(\frac{y}{a},\infty\biggr]\times(y,\infty]\biggr)\qquad (t \to\infty)\\
& =& y^{-1} \mu\biggl(\biggl(\frac{1}{a}, \infty\biggr]\times(1,\infty]\biggr) =: c(a)y^{-1},
\end{eqnarray*}
by defining $c(a) = \mu((a^{-1}, \infty]\times(1,\infty])$ and
using the homogeneity property \eqref{eqn:muscales}. Note that the
conditional non-degeneracy of $\mu$ implies that $c$ is non-constant
and non-decreasing.
\end{pf}

The condition ``$X > 0$ almost surely'' in Proposition~\ref
{prop:charEsqcap} can be removed if we assume that $\lim_{t \to\infty} t\P({Y} > t) \to1$.

\subsection{Polar coordinates} \label{subsec:polar}

Section~\ref{subsec:when} shows that when the limit measure is not
a product measure, we can transform $(X,Y)$ to $(X^*,Y^*)$ such that
\begin{eqnarray}\label{eqn:stdstar}
\P[t^{-1}({X^*},{Y^*}) \in \cdot ]
\cnvg\mu^{**}(\cdot)\qquad \mbox{in } \mathbb{M}_{+}(\Enb).
\end{eqnarray}
Hence, $\mu^{**}$ satisfies \eqref{eqn:muscales}
and, when written in polar coordinates, has a spectral
form \cite{heffernanresnick2007}, Section 3.2. We
summarize some useful facts.
For convenience, take the norm
$\|(x,y)\|=|x|+|y|, (x,y)\in\R^2,$
although any other norm would suffice. A
standard homogeneity argument \cite{resnickbook2008}, Chapter
5, yields, for $r>0$ and $\Lambda$ a Borel subset of $[0,1)$,
\begin{eqnarray}
&&\mu^{**} \biggl\{ (x,y) \in[0,\infty]\times(0,\infty]\dvtx  x+y>r,
\frac{x}{x+y} \in\Lambda
\biggr\} \nonumber
\\[-8pt]\\[-8pt] \nonumber
&&\quad= r^{-1}\mu^{**} \biggl\{ (x,y)\in[0,\infty]\times(0,\infty]\dvtx x+y>1,
\frac{x}{x+y} \in\Lambda
\biggr\} =: r^{-1} S(\Lambda), \nonumber\label{eqn:defS}
\end{eqnarray}
where $S$ is a Radon measure on $[0,1)$.
For $x>0, y>0$, we get, from \eqref{eqn:defS},
\begin{eqnarray}
\mu^{**} \bigl([0,x]\times(y,\infty] \bigr) & = y^{-1}\int_0^{x/(x+y)}
(1-w)S(\mathrm{d}w) -x^{-1}\int_0^{x/(x+y)}wS(\mathrm{d}w).\label{eqn:altForm}
\end{eqnarray}
$S$ need not be a finite measure on $[0,1)$, but to guarantee that
%
\begin{equation}\label{eqn:defHStar}
H^{**}(x):=\mu^{**} \bigl( [0,x]\times(1,\infty] \bigr)
\end{equation}
is a probability measure, we can see by taking $x \to\infty$ in \eqref
{eqn:altForm} that we
need
%
\begin{equation}\label{eqn:condOnS}
\int_0^1(1-w)S(\mathrm{d}w)=1.
\end{equation}

Conclusion: The {class of conditional limits
$H^{**}(x)=\lim_{t \to\infty}P[{X^*}/{t} \leq
x|Y^*>t]$ or limits } $\mu^{**}$ in \eqref{eqn:stdstar}
is indexed by Radon measures $S$ on $[0,1)$ satisfying
condition \eqref{eqn:condOnS}.
\begin{Example}[(Finite angular measure)] \label{ex:finang}
If $S$ is uniform on $[0,1)$, $S(\mathrm{d}w) = 2\,\mathrm{d}w$, then
\eqref{eqn:condOnS} is satisfied and we have
\[
\mu^{**}\bigl([0,x]\times(y,\infty]\bigr) = \frac{x}{y(x+y)}.
\]
Putting $y=1$, we get {the Pareto distribution}
$H^{**}(x)=1-(1+x)^{-1} $ for $ x>0.$
\end{Example}
\begin{Example}[(Infinite angular measure)]\label{ex:infang}
The infinite measure $S(\mathrm{d}w) = (1-w)^{-1}\,\mathrm{d}w$
satisfies equation \eqref{eqn:condOnS} and we have
\[
\mu^{**}\bigl([0,x]\times(y,\infty]\bigr) = \frac{1}{y} + \frac{1}{x}
\log\biggl(1-\frac{x}{x+y}\biggr).
\]
Putting $y=1$ yields
$H^{**}(x) =1-{x}^{-1}\log(1+x), x>0,$
and $H^{**}$ is a continuously increasing probability distribution function.
One way to get a class of infinite angular measures satisfying
\eqref{eqn:condOnS} is to take $S(\mathrm{d}w) = (1-w)^{-1} F(\mathrm{d}w)$ for
probability measures $F(\cdot)$ on $[0,1)$.
\end{Example}

\section{Extending the CEV model to a multivariate extreme value
model}\label{sec:extension}
The CEV model assumes the existence of a vague limit in a {smaller}
subset of
Euclidean space than {that required} by classical MEVT.
Given a CEV model, when can it be extended to
a MEVT model?
If such an extension of the CEV model is possible, then $X$ will also
have a distribution in a DOA, so this will be
assumed. The following is a sufficient condition for
such an extension.
\begin{Proposition}{\label{thm:exten}}
Suppose that $(X,Y) $
satisfy Definition~\ref{def:conevm} and, in particular,
\eqref{eqn:cond2}--\eqref{eqn:Hispdf}.
Assume that $X
\in D(G_{\lambda})$ for some $\lambda\in\R$ so that there exist
functions $\chi(t) > 0, \phi(t) \in\R$ such that
\begin{eqnarray*}
t \P \biggl(\frac{X-\phi(t)}{\chi(t)} > x \biggr)  \to(1 + \lambda x)^{-1/\lambda},\qquad 1+\lambda x > 0 .
\end{eqnarray*}
If $\lim_{t \to\infty} \alpha(t)/\chi(t) $ exists and is finite and
both $\lim_{t \to\infty} \beta(t), \lim_{t \to\infty} \phi(t) $
exist ($\le\infty$) and are equal, then $(X,Y)$ is in the domain
of attraction of a multivariate
extreme value distribution on $\E^{(\lambda,\gamma)}$; that is,
for a Radon measure $\mu_{X,Y}(\cdot) $ on $ \E^{(\lambda,\gamma)}$,
\[
t\P\biggl[ \biggl(\frac{X-\phi(t)}{\chi(t)},
\frac{Y-b(t)}{a(t)} \biggr) \in \cdot \biggr] \cnvg
\mu_{X,Y}(\cdot)\qquad \mbox{in } \mathbb{M}_{+} \bigl(
\E^{(\lambda,\gamma)} \bigr).
\]
\end{Proposition}

\begin{pf}
For $\lambda>0$, the proof is a consequence of cases 1 and 2 of
Theorem~\ref{thm:gencon}. The other cases can be proven similarly.
\end{pf}

We now discuss the extension of the CEV model to MEVT
after first standardizing $(X,Y)$ to $(X^*,Y^*),$ which is regularly
varying on $\Enb$. We consider extending regular
variation on $\Enb$ to an \textit{asymptotically tail equivalent} regular
variation on $\E$, a notion we define next.
\begin{Definition}[(Tail equivalence in multivariate regular variation \cite{maulikresnick2005})] \label{def:tailequiv}
If $\bX$ and $\bY$ are $\R_+^d$-valued random vectors, then $\bX$
and $\bY$ are \textit{\textit{tail equivalent}} on a cone $\C
\subset\overline{\R}_{+}^d
$ if there exists a scaling function $b(t) \uparrow
\infty$ such that
\[
t\P[{\bX}/{b(t)} \in \cdot ]
\cnvg\nu(\cdot) \quad\mbox{and}\quad
t\P[{\bY}/{b(t)} \in \cdot ]
\cnvg c\nu(\cdot)
\]
in $M_{+}(\C)$ for some $c > 0$ and non-null Radon measure $\nu$ on
$\C$. We write $X \stackrel{te(\C)}{\sim} Y$.
\end{Definition}
\begin{Proposition} \label{pro:taileqv}
Suppose that $(X^*,Y^*) $ is standard regularly varying on
$\Enb$ with limit measure $\nunb$ and angular measure $\Snb$ on
$[0,1)$. The following are equivalent:
\begin{enumerate}[(1)]
\item[(1)]$\Snb$ is finite on $[0,1)$;
\item[(2)] there exists a random vector $(X^\#,Y^\#)$ defined on
$\E=[0,\infty]^2\setminus\{\bzero\}$ such that
\[
(X^\#,Y^\#) \stackrel{te(\Enb)}{\sim}
(X^*,Y^*)
\]
and $(X^\#,Y^\#)$ is multivariate regularly varying on $\E$
with limit measure $\nu$ such that $\nu|_{\Enb} = \nunb$.
\end{enumerate}
\end{Proposition}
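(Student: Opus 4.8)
The plan is to prove the two implications separately; the decisive structural point is that the unit sphere $\aleph:=\{(x,y)\in\R_{+}^2:x+y=1\}$ of $\E$ in the norm $\|(x,y)\|=x+y$ (identified with $[0,1]$ via $(x,y)\mapsto x$) is compact, whereas the part of it retained by $\Enb$, namely $\aleph\cap\{y>0\}\cong[0,1)$, is not. Consequently any Radon, homogeneous-of-order-$(-1)$ measure on $\E$ has \emph{finite} angular measure on $\aleph$, while $\Snb$ on $[0,1)$ may be infinite. I use the polar representation \eqref{eqn:defS}: $\nunb\{(x,y):x+y>r,\ x/(x+y)\in\Lambda\}=r^{-1}\Snb(\Lambda)$ for Borel $\Lambda\subset[0,1)$, and the analogous representation for any such measure on $\E$.

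\emph{$(2)\Rightarrow(1)$.} Suppose $(X^*,Y^*)$ is standard regularly varying on $\E$ with limit measure $\nu$ and $\nu|_{\Enb}=\nunb$. By homogeneity of order $-1$, $\nu$ has a polar representation $\nu\{x+y>r,\ x/(x+y)\in\Lambda\}=r^{-1}S(\Lambda)$ for a measure $S$ on $[0,1]$. The set $\{(x,y)\in\E:x+y\ge1\}$ is closed in $[\bzero,\binfty]$ and avoids $\bzero$, hence compact in $\E$; since $\nu$ is Radon, $S([0,1])=\nu\{x+y>1\}\le\nu\{x+y\ge1\}<\infty$. For Borel $\Lambda\subset[0,1)$ every point of $\{x+y>r,\ x/(x+y)\in\Lambda\}$ lies in $\Enb$, so $\nu|_{\Enb}=\nunb$ forces $r^{-1}S(\Lambda)=r^{-1}\Snb(\Lambda)$, i.e.\ $\Snb=S|_{[0,1)}$; therefore $\Snb([0,1))\le S([0,1])<\infty$.

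\emph{$(1)\Rightarrow(2)$.} Assume $m:=\Snb([0,1))<\infty$; here $m>0$ because $\nunb$, hence $\Snb$, is non-null. Let $\Theta$ be an $\aleph$-valued random vector whose law, under $\aleph\cong[0,1]$, is $\Snb(\cdot)/m$; since $\Snb$ is supported on $[0,1)$, $\Theta$ a.s.\ has positive second coordinate. Let $R$ be independent of $\Theta$ with $\P[R>r]=\min(1,m/r)$ for $r>0$; then $t\,\P[R>ts]\to ms^{-1}$ for each $s>0$. Put $(X^*,Y^*):=R\,\Theta$, an $\Enb$-valued (hence $\E$-valued) random vector. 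Since $X^*+Y^*=R$ and $X^*/(X^*+Y^*)$ is the angular coordinate of $\Theta$, for each $s>0$ and Borel $\Lambda\subset[0,1]$
\[
t\,\P\bigl[X^*+Y^*>ts,\ \tfrac{X^*}{X^*+Y^*}\in\Lambda\bigr]=t\,\P[R>ts]\cdot\frac{\Snb(\Lambda\cap[0,1))}{m}\ \longrightarrow\ s^{-1}\Snb(\Lambda\cap[0,1)).
\]
These polar rectangles (with $\Lambda$ an interval) are relatively compact in $\E$ and form a convergence-determining class, and $t\,\P[X^*+Y^*>ts]\to ms^{-1}<\infty$ for every $s>0$; the standard polar-coordinate argument \citep[Chapter 5]{resnickbook:2008} then yields $t\,\P[(X^*,Y^*)/t\in\cdot]\cnvg\nu$ in $\mathbb{M}_{+}(\E)$, where $\nu\{x+y>s,\ x/(x+y)\in\Lambda\}=s^{-1}\Snb(\Lambda\cap[0,1))$. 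Restricting to angles in $[0,1)$ gives $\nu|_{\Enb}=\nunb$, so $(X^*,Y^*)$ and $(X,Y)$ are both standard regularly varying on $\Enb$ with the same limit $\nunb$; hence $(X^*,Y^*)\stackrel{te(\Enb)}{\sim}(X,Y)$, and (2) holds.

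The only substantive point is the compactness dichotomy noted at the start: it forces $\Snb$ to be finite whenever an $\E$-extension exists, and it lets a finite $\Snb$ be realized by the ``Pareto radius $\times$ prescribed direction'' construction. The remaining ingredients — the polar bookkeeping, the elementary radial-tail computation, and the convergence-determining property of polar rectangles — are routine, so I anticipate no real obstacle. If in addition one wishes the extended limit $\nu$ on $\E$ to have non-degenerate marginals, one may take instead $S=\Snb$ on $[0,1)$ together with an arbitrary mass $S(\{1\})>0$; the argument and the identity $\nu|_{\Enb}=\nunb$ are unaffected.
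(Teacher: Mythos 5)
Your proof is correct and follows essentially the same route as the paper: the converse direction identifies $\Snb([0,1))$ with the finite $\nu$-mass of $\{x+y>1\}$ (you via Radon-ness and compactness, the paper via the limit of $t\P(R/t>1)$), and the forward direction uses the same ``independent Pareto radius times angle distributed according to the normalized $\Snb$'' construction, with only minor bookkeeping differences in how the normalizing constant is handled.
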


\begin{pf}
$(1) \Rightarrow(2)$:
Define the polar coordinate transformation $(R,\Theta) = (X^*+Y^*,
\frac{X^*}{X^*+Y^*})$. From Section~\ref{subsec:polar} and
\eqref{eqn:defS},
for $r>0$
and $\Lambda$ a Borel subset of $[0,1)$, as $t \to\infty$,
\[
t\P\biggl[\frac{R}{t} > r, \Theta\in\Lambda \biggr] \to
r^{-1}\Snb(\Lambda)=
\nunb\biggl\{(x,y) \in\Enb\dvtx  x+y >
r, \frac{x}{x+y} \in\Lambda\biggr\}.
\]
Since $\Snb$ is finite on $[0,1)$, the distribution of $\Theta
$ is finite on $[0,1)$. Assume that $\Snb[0,1)=1$ so that it is a probability
measure and extend the measure $\Snb$ to $[0,1]$ by
putting $\Snb(\{1\}) =0$. Define $R_0$ and $\Theta_0$ to be
independent. $\Theta_0$ has distribution given by the
extended $\Snb$ on $[0,1]$ and $R_0$ has the standard Pareto distribution.
Define
$(X^\#,Y^\#) = (R_0\Theta_0, R_0(1-\Theta_0)),
$
so $(X^\#,Y^\#)$ is regularly varying on $\E$ with
standard scaling and limit measure $\nu$, where $\nu|_{\Enb} =
\nunb$.

$(2) \Rightarrow(1)$: Referring to \eqref{eqn:defS}, note that
$ \Snb([0,1) ) = \nunb\{(x,y) \in\Enb: x+y > 1\}.$
Since $(X^\#,Y^\#)$ is regularly varying
on $\E$, we have
\[
t\P({X^\#+Y^\#} > {t}) \to\nu\{(x,y) \in\Enb\dvtx  x+y > 1\} <
\infty.
\]
However,
\[
\nu\{(x,y) \in\Enb\dvtx  x+y > 1\} = \nunb\{(x,y) \in
\Enb\dvtx  x+y > 1\} = \Snb ([0,1) ).
\]
Hence, $\Snb$ is finite on $[0,1)$.
\end{pf}

\section{The CEV model and hidden regular variation} \label{sec:CEVHRV}

A vector $(X^*,Y^*)$ whose distribution is standard bivariate
regularly varying on $\E$
possesses \textit{hidden regular variation} (HRV) \cite{resnick2002a}
if there exists
a Radon measure $\nu_0 \neq0$
on $\E_0 =(0,\infty]\times(0,\infty]$ and a non-decreasing function
$a_0(t) \uparrow
\infty$ with $t/a_0(t) \to\infty$, such that, in $\bM_{+}(\E_0),$
\begin{eqnarray*}
t\P\bigl( a^{-1}_0(t){(X^*,Y^*)} \in \cdot \bigr) \cnvg
\nu_0(\cdot) .
\end{eqnarray*}
If hidden regular variation holds, then $X^*$ and $Y^*$ must be
asymptotically independent,
\[
\nu^{**}([\bzero,(x,y)]^c) = x^{-1} + y^{-1},\qquad x,y > 0.
\]
{Built into the definition of HRV is regular variation on $\E$; our
formulation of the CEV model, when it can be standardized, does not
require regular variation on $\E$, but only on $\E_\sqcap$. Therefore,
comparisons between HRV and the CEV model must be carefully posed.}

Suppose that $(X,Y) \in D(G)$ for a {bivariate}
extreme value distribution $G$. If $X$ and $Y$ are asymptotically
dependent, then the CEV model holds with {either $X$ or $Y$ as
conditioning variable.}
{The centering and scaling functions for CEV can be the
same ones as for MEV.}
We can standardize $(X,Y) \mapsto(X^*,Y^*)$, so
$(X^*,Y^*)$ is standard regularly varying on $\E$ with limit
measure $\nu^{**}$,
but asymptotic dependence implies that
{hidden regular variation} cannot hold.

It is possible for HRV to hold without a CEV model being valid.
\begin{Example}
Suppose that $(X^*,Y^*)$ are random variables such that for $\alpha>1$
and $x,y \ge1$,
\[
\P\bigl[ (X^*,Y^*) \in([\bzero,(x,y)])^c \bigr] = [{x}^{-1} +
y^{-1} + ({x^{\alpha} \wedge y^{\alpha}})^{-1} ]/3.
\]
Asymptotic independence and HRV hold for $(X^*,Y^*)$ with
$a_0(t)=t^{1/\alpha}$. The CEV model does not hold, whatever
normalization we choose;
if a limit holds, it is degenerate.
\end{Example}

The following are comments on the relations between MEVT, the CEV model and
HRV:
MEVT is equivalent via standardization to regular variation on $\E$.
The CEV model, if standardization is possible, is equivalent to
regular variation on $\E_\sqcap$. Hidden regular variation requires
standard regular variation on $\E$ and regular variation of lower
order on $\E_0$. {For a pair $(X^*,Y^*)$ which is standard
regularly varying on $\E$:}
\begin{itemize}
\item asymptotic dependence of $(X^*,Y^*)$ implies that the CEV model
holds, but HRV does not; {the requirement that $a_0(t)$ be of
lower order than $t$ fails;}
\item{the presence of HRV does not imply that the CEV model holds;}
\item we conjecture that if CEV holds with asymptotic independence,
then HRV must hold; this is evident in {several} examples, but we
have no proof to turn this conjecture into fact.
\end{itemize}
More on HRV and generalizations to higher dimensions can be found in
\cite{mitraresnick2010}.

\section{Examples}\label{sec:examples}

This section presents examples that illustrate how the CEV model
differs from the usual
multivariate extreme value model.
\begin{Example}\label{ex:diffcone}
This example emphasizes that different scaling and centering functions
are required for
different cones. We will consider a bivariate random vector which is
multivariate regularly varying on $\E$ with asymptotic
independence. We then show that it possesses hidden regular variation
(see Section~\ref{sec:CEVHRV}) and also CEV limits under different
scalings. Let $X,Y$ {be} i.i.d.~$\mathit{Pareto}(1)$ random variables. Let $B$
be a Bernoulli random variable with $\P(B=0)=\P(B=1) = 0.5,$ $U$
a $\operatorname{Uniform} (0,1)$ random variable and suppose that $X,Y,B,U$ are all
independent. Define
\[
\bZ=(Z_1,Z_2) = B(UX,X^2) + (1-B)(Y^2,UY).
\]
As $t \to\infty$, observe that the following hold:
\begin{longlist}[(iii)]
\item[(i)] in $\mathbb{M}_{+}(\E),$
%
\begin{equation}\label{exeq:parE}
t\P \biggl[ \frac{\bZ}{t^2} \in
([0,x]\times[0,y])^c \biggr] \to\frac12 \biggl[\frac{1}{\sqrt{x}}+\frac{1}{\sqrt
{y}} \biggr],\qquad x\wedge y >0;
\end{equation}
\item[(ii)] in $\mathbb{M}_{+}(\E_{0})$,
%
\begin{equation}\label{exeq:parE0}
t\P \biggl[ \frac{\bZ}{t} \in
(x,\infty]\times(y,\infty] \biggr] \to\frac12 \biggl[\frac1x + \frac1y \biggr],\qquad x
\wedge y >0;
\end{equation}
\item[(iii)] in $\mathbb{M}_{+}(\Enb)$, the limit is not a
product measure and we have
%
\begin{equation}\label{exeq:parEnb}
t\P \biggl[ \biggl( \frac{Z_1}{t},\frac{Z_2}{t^2} \biggr) \in
[0,x]\times(y,\infty] \biggr] \to\frac12 \biggl[\frac{1}{\sqrt{y}} -\frac1{2x}
\biggr]_{+},\qquad x \wedge y >0;
\end{equation}
\item[(iv)] similarly, in $\mathbb{M}_{+}(\Enl)$, the limit is still
not a
product measure and we have
%
\begin{equation}\label{exeq:parEnl}
t\P \biggl[ \biggl( \frac{Z_1}{t^2},\frac{Z_2}{t} \biggr) \in
(x,\infty]\times[0,y] \biggr] \to\frac12 \biggl[\frac{1}{\sqrt{x}} -\frac1{2y}
\biggr]_{+},\qquad x \wedge y >0.
\end{equation}
\end{longlist}
This provides an example for the validity of Theorem~\ref{thm:gencon}.
The example holds even if we ignore the random variable $U$, but then
the distribution of $\bZ$ concentrates on two parabolic lines
restricted to $[0,\infty)^2$. Also, note that the limit measure for (i)
concentrates on the lines
through $\bzero$, the limit measure in (ii) concentrates on the lines
through $\binfty$ and, in (iii) (and, similarly, (iv)), the limit
measure does not concentrate on
the boundaries. This final feature can also be observed in Example~\ref{ex:ex6}.
\end{Example}
\begin{Example}[(Example~\ref{ex:ex1} continued)] \label{ex:ex6}
Recall Example~\ref{ex:ex1}. We had a bivariate joint density for
$(X,Y)$ and in the different cones, we have convergence with different
normalizations (as $t \to\infty$):
\begin{eqnarray*}
\mbox{in } \mathbb{M}_{+}(\E)\dvtx  t\P\biggl( \biggl(\frac{X}{t}, \frac{Y}{t} \biggr) \in
([0,x]\times[0,y])^c \biggr) &\to&\frac{1}{x} + \frac1y,\qquad x>0, y>0,\\
\mbox{in } \mathbb{M}_{+}(\E_0)\dvtx  t\P\biggl(\frac{X}{\sqrt{t}} > x, \frac
{Y}{\sqrt{t}} > y \biggr) &\to&\frac{1}{x^2} + \frac1{y^2},\qquad x>0, y>0,\\[1pt]
\mbox{in } \mathbb{M}_{+}(\Enb)\dvtx  t\P\biggl(\frac{X}{\sqrt{t}} \le x, \frac
{Y}{t} > y\biggr) &\to&\frac{1}{y} - \frac{1}{y + x^2}, \qquad x \ge0, y > 0,\\[1pt]
\mbox{in } \mathbb{M}_{+}(\Enl)\dvtx  t\P\biggl(\frac{X}{t} > x, \frac{Y}{\sqrt
{t}}\le y\biggr) &\to&\frac{1}{x} - \frac{1}{x + y^2},\qquad x >0, y \ge0.
\end{eqnarray*}
\end{Example}
\begin{Example}
Suppose that $(X,Y)$ has the following distribution generated by an
Archimedean copula:
\begin{eqnarray*}
F(x,y) := \frac{(1-1/x)(1-1/y)}{(1+1/(xy))},\qquad x,y \ge1.
\end{eqnarray*}
Clearly, $X$ and $Y$ are marginally $\mathit{Pareto}(1)$ random variables and, for $x,y>0$,
\begin{eqnarray*}
t\P\bigl[t^{-1}{(X,Y)}\in[\bzero,(x,y)]^c\bigr]  = t\bigl(1-F(tx,ty)\bigr) \to x^{-1} +
y^{-1} \qquad (t \to\infty).
\end{eqnarray*}
Hence, asymptotic independence holds and, for $x,y>0,$
\begin{eqnarray*}
t\P[ X >{t^{1/3}}x, Y > {t^{1/3}}y ]  \to \frac{1}{xy}\biggl(\frac1x+\frac1y\biggr) \qquad (t \to\infty),
\end{eqnarray*}
which implies hidden regular variation. {We also have}
{the} CEV {model} holding with a limit product measure since
\begin{eqnarray*}
t\P[ X \le x, Y >t y ] & \to(1-x^{-1} )y^{-1} \qquad (t \to\infty).
\end{eqnarray*}
\end{Example}

\begin{Example}
This example gives a class of limit distributions on
$\Enb$ indexed by probability distributions on $[0,\infty]$. Suppose that
$R$ is a Pareto random variable on $[1,\infty)$ with parameter $1$
and $\xi $ is a random variable with distribution $G(\cdot)$ on
$[0,\infty]$. Assume that $\xi$ and
$R$ are independent and define
$ (X,Y) = (R\xi,R).$
Then, for $y>0, x \ge0 $ and $ty>1$,
\begin{eqnarray*}
t\P \biggl[ \frac{X}{t} \le x ,\frac{Y}{t} > y \biggr]
&=& t\P \biggl[\frac{R\xi}{t} \le x ,\frac{R}{t} > y \biggr]
= t\int_{ty}^{\infty} \P\biggl[ \xi \le \frac{tx}{r} \biggr] r^{-2}\, \mathrm{d}r
\\[1pt]
&=& \int_{y}^{\infty} \P\biggl[ \xi \le\frac{x}{s} \biggr] s^{-2} \,\mathrm{d}s =
\int_{y}^{\infty} G \biggl( \frac{x}{s} \biggr) s^{-2} \,\mathrm{d}s
\\[1pt]
&=& \frac{1}{x}\int_{0}^{x/y} G(s)\, \mathrm{d}s
\\[1pt]
&=& \mu \bigl([0,x]\times(y,\infty] \bigr).
\end{eqnarray*}

This can be expressed in polar coordinates. The
angular measure $S(\cdot)$ on $\Enb$
is
\[
S([0,\eta]) = \mu\biggl\{(u,v)\dvtx  u+v > 1, \frac{u}{u+v} \le\xi \biggr\},\qquad
0\le\eta< 1.
\]
Hence, we have
\begin{eqnarray*}
&&t\P\biggl[\frac{X+Y}{t} > 1, \frac{X}{X+Y} \le\eta \biggr]
\\
&&\quad=t\P\biggl[\frac{R\xi+ R}{t} > 1, \frac{R\xi}{R\xi + R} \le\eta \biggr]
=t\P\biggl[\frac{R(1+\xi )}{t} > 1,\xi \le\frac{\eta}{1-\eta} \biggr]
\\
&&\quad= t \int_{0\le s \le\eta/(1-\eta)}
\P\biggl[\frac{R}{t}(1+s) > 1\biggr] G(\mathrm{d}s)
= t \int_{0\le s \le
\eta/(1-\eta)} \biggl(\frac{t}{1+s} \vee1 \biggr)^{-1} G(\mathrm{d}s)
\\
&&\quad= \int_{0\le s \le\eta/(1-\eta)} (1+s) G(\mathrm{d}s)
\end{eqnarray*}
for $t > {1}/{(1-\eta)}$.
However, the left-hand side goes to $\mu\{(u,v)\dvtx  u+v
> 1, \frac{y}{u+v} \le\xi \} = S([0,\eta])$ as $t \to\infty$
and, thus,
\[
S([0,\eta]) = \int_{0\le s \le
\eta/(1-\eta)} (1+s) G(\mathrm{d}s),\qquad 0\le\eta< 1.
\]
Hence, $S$ is a finite angular measure if and only if $G$ has first moment.
\end{Example}

\section{Proofs}\label{sec:proofs}

In this section, we provide proofs of some of the results given in the
previous sections.

\subsection{\texorpdfstring{Proof of
Theorem~\protect\ref{thm:gencon}}{Proof of Theorem 2.1}}

Assume that $\lambda> 0, \gamma> 0$; other
cases can be dealt with similarly.
From \eqref{eqn:c1} and \eqref{eqn:c2}, respectively, we get
\begin{eqnarray}\label{eqn:m1}
t\P \biggl(\frac{Y-b(t)}{a(t)} > y \biggr) & \to&(1 + \gamma y)^{-1/\gamma},\qquad 1+\gamma y > 0, \\\label{eqn:m2}
t \P \biggl(\frac{X-\phi(t)}{\chi(t)} > x \biggr) & \to&(1 + \lambda x)^{-1/\lambda},\qquad 1+\lambda x > 0 .
\end{eqnarray}
Hence, for $(x,y) \in\E^{(\lambda)}\times\E^{(\gamma)}$, which are
continuity points of the limit measures $\mu_{X,Y>} $ and $\mu_{Y,X>}$,
\begin{eqnarray}\label{eqn:forgenconv}
Q_t(x,y)&:  =& t\P\biggl[ \biggl(\frac{X-\phi(t)}{\chi(t)},
\frac{Y-b(t)}{a(t)} \biggr) \in ([-\infty,x]\times[-\infty,y])^c \biggr] \nonumber\\
& =& t\P\biggl[\frac{X-\phi(t)}{\chi(t)} > x \biggr]
+ t\P\biggl[\frac{Y-b(t)}{a(t)}> y \biggr]\nonumber
\\[-8pt]\\[-8pt]
&&{}-t\P\biggl[\frac{X-\phi(t)}{\chi(t)} > x , \frac{Y-b(t)}{a(t) }> y
\biggr] \nonumber\\
& =& A_t(x) + B_t(y) + C_t(x,y)\qquad \mbox{(say)}.\nonumber
\end{eqnarray}
It suffices to show that $Q_t(x,y)$ has a limit and that the limit is
non-degenerate in $(x,y)$ (using a generalized version of
\cite{resnickbook2007}, Lemma 6.1). As $t \to\infty$, we have
the limits for $A_t(x)$ and $B_t(y)$ from \eqref{eqn:m2}
and \eqref{eqn:m1}. Clearly, $0 \le C_t(x,y) \le
\min(A_t(x),B_t(y))$ and these inequalities also hold for any limit
of $Q_t$.

From \cite{heffernanresnick2007}, Proposition 1, there exist
functions $\psi_1(\cdot),\psi_2(\cdot),\psi_3(\cdot),\psi_4(\cdot)$
such that
\begin{eqnarray}
\lim_{t \to\infty}  \frac{\alpha(tz)}{\alpha(t)} &=&\psi_1(z) =z^{\rho_1},\qquad
\lim_{t \to\infty} \frac{\beta(tz)- \beta(t)}{\alpha(t)} = \psi_2(z), \label{eqn:limalbet}\\
\lim_{t \to\infty}  \frac{c(tz)}{c(t)}&=& \psi_3(z)=z^{\rho_2},\qquad
\lim_{t \to\infty}  \frac{d(tz) -d(t)}{c(t)}= \psi_4(z) \label{eqn:limcd}
\end{eqnarray}
for $z>0$ and $\rho_1,\rho_2$ real. Temporarily assume that $\rho_1$ and
$\rho_2 $ are positive. Either $\psi_2(z) =
0$, which implies that $\lim_{t \to
\infty}\beta(t)/{\alpha(t)} = 0$ (from \cite
{binghamgoldieteugels1987}, Theorem 3.1.12(a,c)) or
$\psi_2(z) = k(z^{\rho_1}-1)/\rho_1$ for $ k \neq0$, which means that
$\lim_{t \to\infty}{\beta(t)}/{\alpha(t)} =
{k}/{\rho_1}$ (\cite{dehaanferreira2006}, Proposition B.2.2).
Hence, allowing the constant $k$ to be zero as well, we can write
both cases as $\lim_{t \to\infty}{\beta(t)}/{\alpha(t)}
= {k_1}/{\rho_1}$ for some $k_1 \in\R$. Similarly, we
have $\lim_{t \to\infty}{d(t)}/{c(t)} =
{k_2}/{\rho_2}$ for some $k_2 \in\R$.

Additionally,
marginal DOA conditions for $X,Y$ yield
($z>0, w>0$)
%
\begin{equation}\label{eqn:aisrvgamma}
\lim_{t \to\infty}\frac{b(tz)- b(t)}{a(t)} =\frac{z^{\gamma}-1}{\gamma} ,\qquad
\lim_{t \to\infty}\frac{\phi(tw)-\phi(t)}{\chi(t)}=\frac{w^{\lambda}-1}{\lambda},
\end{equation}
which imply
%
\begin{equation}\label{eqn:chiisrvlambda}
\lim_{t \to\infty}\frac{a(tz)}{a(t)} =z^{\gamma},\qquad
\lim_{t \to\infty} \frac{\chi(tw)}{\chi(t)}= w^{\lambda}.
\end{equation}
Observe that
\begin{eqnarray}\label{eqn:cwithx}
C_t(x,y) & =& t\P\biggl[\frac{X-\phi(t)}{\chi(t)} > x ,
\frac{Y-b(t)}{a(t) }> y \biggr] \nonumber\\[-8pt]\\[-8pt]
& =& t\P\biggl[\frac{X-\beta(t)}{\alpha(t)} > \biggl(x + \frac{\phi(t)}{\chi(t)} \biggr)
\frac{\chi(t)}{\alpha(t)} - \frac{\beta(t)}{\alpha(t)} , \frac{Y-b(t)}{a(t) }> y\biggr]\nonumber
\end{eqnarray}
and also
\begin{eqnarray}
C_t(x,y)  =
t\P\biggl[\frac{X-\phi(t)}{\chi(t)} > x , \frac{Y-d(t)}{c(t) }>
\biggl(y + \frac{b(t)}{a(t)} \biggr)\frac{a(t)}{c(t)} - \frac{
d(t)}{c(t)} \biggr]. \label{eqn:cwithy}
\end{eqnarray}
From \cite{dehaanferreira2006}, Proposition
B.2.2, we have that
\begin{eqnarray}
 {b(t)}/ {a(t)} \to
{1}/{\gamma} \quad\mbox{and}\quad {\phi(t)}/{\chi(t)} \to
{1}/{\lambda}.\label{eqn:bapc}
\end{eqnarray}

We analyze $C_t(x,y) $ for the different cases. First, we will
show that at least one of the limits $\lim_{t \to
\infty}\frac{\chi(t)}{\alpha(t)}$ and $\lim_{t \to\infty}
\frac{a(t)}{c(t)} $ must exist. Suppose both do not exist.
We have, for $(x,y) \in\E^{(\lambda)}\times\E^{(\gamma)}$, which are
continuity points of the limit measures $\mu_{X,Y>} $ and $\mu_{Y,X>}$,
\begin{eqnarray}
t\P\biggl[\frac{X- \beta(t)}{\alpha(t)}> x, \frac{Y - b(t)}{a(t)} >y \biggr] & \to&
\mu_{X,Y>}\bigl((x,\infty] \times(y,\infty]\bigr), \label{eqn:c1xy}\\
t\P\biggl[\frac{X- \phi(t)}{\chi(t)}> x, \frac{Y - d(t)}{c(t)} >y \biggr] & \to&\mu_{Y,X>}\bigl((x,\infty] \times(y,\infty]\bigr).
\label{eqn:c2xy}
\end{eqnarray}
Now, \eqref{eqn:c1xy} implies that
\begin{eqnarray*}
&&t\P\biggl[\frac{X- \phi(t)}{\chi(t)} \frac{\chi(t)}{\alpha(t)} +
\frac{\phi(t)- \beta(t)}{\alpha(t)}> x, \frac{Y -
d(t)}{c(t)}\frac{c(t)}{a(t)} + \frac{d(t)-b(t)}{a(t)}
> y \biggr] \\
&&\quad\to
\mu_{X,Y>} \bigl((x,\infty] \times(y,\infty] \bigr),
\end{eqnarray*}
{which is equivalent to}
\begin{eqnarray*}
&&t\P\biggl[\frac{X-\phi(t)}{\chi(t)} > \frac{\alpha(t)}{\chi(t)}
\biggl(x-\frac{\phi(t)- \beta(t)}{\alpha(t)} \biggr), \frac{Y - d(t)}{c(t)}
> \frac{a(t)}{c(t)} \biggl(y - \frac{d(t)-b(t)}{a(t)} \biggr) \biggr] \\
&&\quad\to \mu_{X,Y>} \bigl((x,\infty] \times(y,\infty] \bigr).
\end{eqnarray*}
From \eqref{eqn:c2xy}, we also have that the left-hand side of the
previous line has a limit
\begin{eqnarray*}
&&t\P\biggl[\frac{X- \phi(t)}{\chi(t)} > \frac{\alpha(t)}{\chi(t)} \biggl(x -\frac{\phi(t)-
\beta(t)}{\alpha(t)} \biggr), \frac{Y - d(t)}{c(t)}> \frac{a(t)}{c(t)} \biggl(y - \frac{d(t)-b(t)}{a(t)} \biggr) \biggr]\\
&&\quad \to\mu_{Y,X>} \bigl((f(x),\infty] \times(g(y),\infty] \bigr)
\end{eqnarray*}
for some $(f(x),g(y))$, assumed to be a continuity point of the limit
$\mu_{Y,X>}$, if and only if, as $t\to\infty$, the following two
limits hold:
\begin{eqnarray}
\frac{\alpha(t)}{\chi(t)} \biggl(x -\frac{\phi(t)- \beta(t)}{\alpha(t)} \biggr) &\to& f(x), \label{eqn:limf}\\
\frac{a(t)}{c(t)} \biggl(y -\frac{d(t)- b(t)}{a(t)} \biggr) &\to& g(y) . \label{eqn:limg}
\end{eqnarray}
For $\mu_{Y,X>}$ to be non-degenerate, $f$ and $g$ should be non-constant
and we should also have $\mu_{X,Y>} ((x,\infty] \times(y,\infty] ) =
\mu_{Y,X>} ((f(x),\infty] \times(g(y),\infty] )$. Considering
\eqref{eqn:limf} and \eqref{eqn:limg}, we can see that the limit as
$t \to\infty$ exists if and only if $ \lim_{t \to\infty}
{a(t)}/{c(t)} $ and $ \lim_{t \to\infty}
{\chi(t)}/{\alpha(t)} $ exists.

We conclude that $ \lim_{t \to\infty}{\chi(t)}/{\alpha(t)} \in
[0,\infty]$ and consider the following cases:
\begin{itemize}
\item\textit{Case} 1: $\lim_{t \to\infty}{\chi(t)}/{\alpha(t)} =
\infty$.
Consider \eqref{eqn:cwithx} and note
that
\[
\biggl(x + \frac{\phi(t)}{\chi(t)} \biggr) \frac{\chi(t)}{\alpha(t)} -
\frac{\beta(t)}{\alpha(t)} \to \biggl(x+ \frac{1}{\lambda} \biggr)\times
\infty- \frac{k_1}{\rho_1} = \infty,
\]
which entails that
$\lim_{t \to\infty} C_t(x,y) = \mu_{X,Y>} (\{\infty\}
\times(y,\infty]) = 0.$ Hence,
\[
\lim_{t \to\infty} Q_t(x,y)
= (1+\lambda x)^{-1/\lambda} + (1+\gamma y)^{-1/\gamma}.
\]
\item\textit{Case 2}: $\lim_{t \to\infty}{\chi(t)}/{\alpha(t)} = M \in
(0,\infty)$.
From \eqref{eqn:cwithx}, we have
\[
\biggl(x + \frac{\phi(t)}{\chi(t)} \biggr) \frac{\chi(t)}{\alpha(t)} - \frac{\beta
(t)}{\alpha(t)} \to \biggl(x+ \frac{1}{\lambda} \biggr)\times M - \frac{k_1}{\rho
_1} =f(x) \qquad\mbox{(say)}.
\]
Therefore,
\[
\lim_{t \to\infty} C_t(x,y) = \mu_{X,Y>} \bigl((f(x),\infty]
\times(y,\infty]\bigr) \le(1+\lambda y)^{-1/\lambda}
\]
with strict
inequality holding for some $x$ because of the non-degeneracy
condition \eqref{eqn:nondegx} for $\mu_{X,Y>}$. Hence,
\[
\lim_{t \to
\infty} Q_t(x,y) = (1+\lambda x)^{-1/\lambda} + (1+\gamma
y)^{-1/\gamma} - \mu_{X,Y>} \bigl((f(x),\infty] \times(y,\infty]\bigr).
\]
\item\textit{Case} 3: $\lim_{t \to\infty}{\chi(t)}/{\alpha(t)} = 0$.
In this case, \eqref{eqn:cwithx} leads to a
degenerate limit in $x$ for $C_t(x,y)$ and putting $M_1 =
{k}/{\rho_1}$, we get
\begin{eqnarray*}
\lim_{t \to\infty} C_t(x,y) = \mu_{X,Y>} \bigl((M_1,\infty] \times
(y,\infty]\bigr) =: f_1(y) \le(1+\gamma y)^{-1/\gamma}.
\end{eqnarray*}
So, consider \eqref{eqn:cwithy}.
\begin{enumerate}[(1)]
\item[(1)] If $\lim_{t \to
\infty} {a(t)}/{c(t)} $ exists in $(0,\infty]$, then we can
use a similar technique as in case~1 or~2 to obtain a
non-degenerate limit for $Q_t(x,y)$.
\item[(2)] If $\lim_{t\to\infty} {a(t)}/{c(t)} =0$, then for some $M_2
\in\R$,
\begin{eqnarray*}
\lim_{t \to\infty} C_t(x,y)  = \mu_{Y,X>}
\bigl((x,\infty] \times(M_2,\infty]\bigr) =: f_2(x) \le(1+\lambda
x)^{-1/\lambda}.
\end{eqnarray*}
 Therefore, we have, for any $(x,y)\in
\E^{(\lambda)}\times\E^{(\gamma)}$ which are continuity points of
the limit measures $\mu_{X,Y>} $ and $\mu_{Y,X>}$,
\begin{eqnarray*}
f_1(y) = \mu_{X,Y>}
\bigl((M_1,\infty]
\times(y,\infty]\bigr)  = \mu_{Y,X>} \bigl((x,\infty] \times(M_2,\infty]\bigr) = f_2(x).
\end{eqnarray*}
It is now easy to check that for any $(x,y)\in\E^{(\lambda)}\times
\E^{(\gamma)}$ which are continuity points of the limit measures
$\mu_{X,Y>} $ and $\mu_{Y,X>}$, we have $f_1(y) = f_2(x) = 0$. Hence,
$C_t(x,y) \to
0$ and thus $Q_t(x,y)$ has a non-degenerate limit.
\end{enumerate}
\end{itemize}
This proves the result. For general $\rho_1,\rho_2 \in\R$, we can
follow the same steps to get to the result by considering cases when
$\rho_i$ is greater than, less than or equal to zero, for each $i=1,2$.

\subsection{\texorpdfstring{Proof of the summary following
Theorem~\protect\ref{thm:productsBad}}{Proof of the summary following Theorem 3.2}}

\hspace*{7.5pt} (1) This part has been dealt with in \cite{heffernanresnick2007}, Section
2.4.

(2) First, simplify the problem. For
$(x,y)$, a continuity point of $\mu(\cdot)$,
\[
t \P \biggl[\frac{\lambda(X^*)-\beta(t)}{\alpha(t)} \le x,
\frac{b(Y^*)-b(t)}{a(t)} > y \biggr] \to
\tilde{\mu} ([-\infty,x] \times
(y,\infty] )\qquad (t\to\infty)
\]
is equivalent, as $t\to\infty$, to
%
\begin{eqnarray}\label{eqn:stdformthm}
t \P \biggl(\frac{\lambda(X^*)-\beta(t)}{\alpha(t)} \le x,
\frac{Y^*}{t} > y \biggr) &\to&\tilde{\mu} \bigl([-\infty,x] \times
(h(y),\infty] \bigr)\nonumber
\\[-8pt]\\[-8pt]
&=:& \mu^{*} \bigl([-\infty,x] \times
(y,\infty] \bigr),\nonumber
\end{eqnarray}
where
%
\begin{equation}
h(y) =
\cases{
(1+\gamma y)^{1/\gamma}, &\quad$\gamma \neq 0$,\cr
\mathrm{e}^y, &\quad$\gamma = 0$.
}
\end{equation}
Hence, $\eqref{eqn:trans}$ is equivalent to
\begin{eqnarray*}
t \P \biggl[ \biggl(\frac{\lambda(X^*)-\beta(t)}{\alpha(t)}, \frac{Y^*}{t} \biggr) \in
\cdot \biggr] & \cnvg\mu^{*}(\cdot)
\end{eqnarray*}
and $\mu^{*}$ is a non-null Radon measure on
$[-\infty,\infty]\times\overline\E^{(\gamma)}$ satisfying the
conditional non-degeneracy conditions. Hence, our proof will show
the existence of $\lambda(\cdot)$ satisfying \eqref{eqn:stdformthm}.
Now, note that \eqref{eqn:condpsi1psi2} implies that
$\alpha(\cdot) \in RV_{\rho}$ for some $\rho\in\R$ and $\psi_1(x)
= x^{\rho}$ (\cite{resnickbook2008}, page 14). The function
$\psi_2(\cdot)$ may be identically equal to 0 or

%
\begin{equation} \label{eqn:psi2}
\psi_2(x) =
\cases{
k(x^{\rho}-1)/\rho&\quad \mbox{if }$\rho\neq0, x > 0$, \cr
k \log x &\quad \mbox{if }$\rho= 0, x > 0$
}
\end{equation}
for $k \neq0$ (\cite{dehaanferreira2006}, page 373). We have
assumed that $(\psi_1,\psi_2) \neq(1,0)$. We will consider three
cases: $ \rho>0, \rho=0, \rho<0$.

\textit{Case} 1: $\rho> 0$.
First, suppose that $\psi_2 \equiv0$.
Since $\alpha(\cdot) \in RV_{\rho}$, there exists
$\tilde{\alpha}(\cdot) \in RV_{\rho}$ which is ultimately
differentiable and
strictly increasing and $\alpha\sim
\tilde{\alpha}$ (\cite{dehaanferreira2006}, page 366). Thus,
$\tilde{\alpha}^{\leftarrow}$ exists. Additionally, from
\cite{binghamgoldieteugels1987}, Theorem 3.1.12(a), we have that
$\beta(t)/\alpha(t) \to0$. Hence, for $x > 0$, as
$t\to\infty$,
we have
\begin{eqnarray*}
 \frac{\tilde{\alpha}(tx) + \beta(t)}{\alpha(t)} = \frac{\tilde{\alpha
}(tx)}{\tilde{\alpha}(t)}\cdot\frac{\tilde{\alpha}(t)}{\alpha(t)} +
\frac{\beta(t)}{\alpha(t)} \to x^{\rho}
\end{eqnarray*}
and inverting, we get, for $ z > 0,$
\begin{eqnarray*}
 {\tilde{\alpha}^{\leftarrow} \bigl(\alpha(t)z +
\beta(t) \bigr)}/{t} \to z^{1/\rho}\qquad (t\to\infty).
\end{eqnarray*}
Thus, we have\vspace*{-2pt}
\begin{eqnarray*}
t \P \biggl[\frac{\tilde{\alpha}(X^*)-\beta(t)}{\alpha(t)} \le x,
\frac{Y^*}{t} > y \biggr] &=&t \P \biggl[\frac{X^*}{t} \le
\frac{\tilde{\alpha}^{\leftarrow}(\alpha(t)x+\beta(t))}{t},
\frac{Y^*}{t} > y \biggr] \\[-2pt]
& \to&\mu^{**} \bigl([0,x^{1/\rho}] \times(y,\infty] \bigr).
\end{eqnarray*}
Set $ \lambda(\cdot) = \tilde{\alpha}(\cdot) $ and this defines
$\tilde\mu$.

Next, suppose that $\psi_2 \neq0$. Therefore,\vspace*{-2pt}
\[
\psi_2(x) = \lim_{t \to\infty} {\bigl(\beta(tx) -
\beta(t) \bigr)}/{\alpha(t)}= k(x^{\rho}-1)/\rho,
\]
that is, $\beta(\cdot) \in
RV_{\rho}$ and $k>0$. There exists $\tilde\beta$ which is
ultimately differentiable, strictly
increasing and such that $ \tilde{\beta} \sim\beta$
(\cite{dehaanferreira2006}, page 366). Thus,
$\tilde{\beta}^{\leftarrow}$ exists. We then have, for $x > 0$, as $t\to
\infty$,\vspace*{-2pt}
\begin{eqnarray*}
\frac{\tilde{\beta}(tx) - \beta(t)}{\alpha(t)}
& =& \frac{\tilde{\beta}(tx) - \beta(tx)}{\alpha(t)} + \frac{\beta(tx) - \beta(t)}{\alpha(t)}\\
& =& \frac{\tilde{\beta}(tx) - \beta(tx)}{\beta(tx)} \frac{\beta
(tx)}{\alpha(tx)}\frac{\alpha(tx)}{\alpha(t)} + \frac{\beta(tx) - \beta
(t)}{\alpha(t)}\\
& \to& (1-1)\cdot
x^{\rho}/{\rho} + k
{(x^{\rho}-1)}/{\rho}
= k{(x^{\rho}-1)}/{\rho}.
\end{eqnarray*}
Inverting, we get, as $t\to\infty$,\vspace*{-2pt}
\[
{\tilde{\beta}^{\leftarrow} \bigl(\alpha(t)x +
\beta(t) \bigr)}/{t} \to(1+{\rho x}/{k})^{1/\rho}.
\]
Thus, we have\vspace*{-2pt}
\begin{eqnarray*}
t \P \biggl[\frac{\tilde{\beta}(X^*)-\beta(t)}{\alpha(t)} \le x,
\frac{Y^*}{t} > y \biggr] &=& t \P \biggl[\frac{X^*}{t} \le
\frac{\tilde{\beta}^{\leftarrow}(\alpha(t)x+\beta(t))}{t},
\frac{Y^*}{t} > y \biggr] \\[-2pt]
&\to&\mu^{**} \biggl(\biggl[0,\biggl(1+\frac{\rho x}{k}\biggr)^{1/\rho}\biggr] \times(y,\infty] \biggr).
\end{eqnarray*}
Here, we can set $ \lambda(\cdot) = \tilde{\beta}(\cdot) $ and this
defines $\tilde\mu$.

\textit{Case} 2: $\rho = 0$.
We have $ \psi_1(x) = 1, \psi_2(x) = k\log x$ for $ x
> 0 $ and some $k \in\R$. By assumption, $(\psi_1,\psi_2)\neq(1,0)$
and hence $k \ne0$.
First, assume that $k>0$, which means that $\beta\in
\Pi_{+}(\alpha)$. There exists
$\tilde{\beta}(\cdot)$ which is continuous, strictly increasing and
$\beta-\tilde{\beta} = \mathrm{o}(\alpha)$ (\cite{dehaanresnick1979b}, page
1031). If $\beta(\infty) =
\tilde{\beta}(\infty) = \infty$, then, for $x > 0$,
\[
\frac{\tilde{\beta}(tx) - \beta(t)}{\alpha(t)} = \frac{\tilde{\beta
}(tx) - \beta(tx)}{\alpha(tx)} \frac{\alpha(tx)}{\alpha(t)} + \frac
{\beta(tx) - \beta(t)}{\alpha(t)}
\to0 + k \log x
\]
and, inverting, we get for $z \in\R$, as $t\to\infty$,
$ {\tilde{\beta}^{\leftarrow} (\alpha(t)z +
\beta(t) )}/{t} \to\exp\{z/k\}.$
Thus, we have
\begin{eqnarray*}
t \P \biggl(\frac{\tilde{\beta}(X^*)-\beta(t)}{\alpha(t)} \le x, \frac
{Y^*}{t} > y \biggr) & =& t \P \biggl(\frac{X^*}{t} \le\frac{\tilde{\beta
}^{\leftarrow}(\alpha(t)x+\beta(t))}{t}, \frac{Y^*}{t} > y \biggr) \\
& \to&\mu\bigl([0,\mathrm{e}^{k/x}] \times(y,\infty] \bigr).
\end{eqnarray*}
If $\beta(\infty)= \tilde{\beta}(\infty)=B < \infty,$ define
\[
\beta^{*}(t) = \frac{1}{B- \tilde{\beta}(t)},\qquad
\alpha^{*}(t) = \frac{\alpha(t)}{(B- \tilde{\beta}(t))^2}
\]
and
we have that $\beta^{*} \in\Pi_{+}(\alpha^*) $, $\beta^*(t) \to
\infty$ and ${(B-\tilde{\beta}(t) )}/{\alpha(t)} \to\infty$ (\cite
{gelukdehaan1987}, page 25).
Hence, we have reduced the problem to the previous case, which implies that
\[
t \P \biggl(\frac{\beta^*(X^*)-\beta^*(t)}{\alpha^*(t)} \le x,
\frac{Y^*}{t} > y \biggr) \to\mu\bigl([0,\mathrm{e}^{k/x}] \times
(y,\infty] \bigr)
\]
or, equivalently,
\[
t \P \biggl(\frac{\tilde{\beta}(X^*)-\tilde{\beta}(t)}{\alpha(t)} \le
\frac{x}{1+\alpha(t)x/(B-\tilde{\beta}(t))}, \frac{Y^*}{t} > y
\biggr) \to\mu\bigl([0,\mathrm{e}^{k/x}] \times(y,\infty] \bigr),
\]
and since ${B-\tilde{\beta}(t)}/{\alpha(t)} \to
\infty$ implies ${\alpha(t)}/{B-\tilde{\beta}(t)} \to0$,
we can write
\[
t \P \biggl(\frac{\tilde{\beta}(X^*)-\tilde{\beta}(t)}{\alpha(t)} \le x,
\frac{Y^*}{t} > y \biggr) \to\mu\bigl([0,\mathrm{e}^{k/x}] \times(y,\infty] \bigr) ,
\]
which implies, since $ \beta-\tilde{\beta} = \mathrm{o}(\alpha)$,
that
\[
t \P \biggl(\frac{\tilde{\beta}(X^*)-\beta(t)}{\alpha(t)} \le x, \frac
{Y^*}{t} > y \biggr) \to\mu\bigl([0,\mathrm{e}^{k/x}] \times(y,\infty] \bigr).
\]
We have thus produced the required transformation $ \lambda(\cdot) =
\tilde{\beta}(\cdot) $.

The case for which $k < 0$, that is, $\beta\in\Pi_{-}(\alpha)$, can be
proven similarly.

\textit{Case} 3:  $\rho< 0$. This case is similar to the case for
$\rho>0$ and is therefore omitted.

\section*{Acknowledgments}
This research was partially supported by ARO Contract W911NF-07-1-0078
at Cornell University. We wish to thank the conscientious referees for
their helpful suggestions.

\printhistory

\end{document}